\newtheorem{thm}{Theorem}
\newtheorem*{thmA}{Theorem A}
\newtheorem{cor}{Corollary} 
\newtheorem{lem}{Lemma} 
\newtheorem{prop}{Proposition}
\theoremstyle{definition}
\theoremstyle{remark}
\newtheorem{rem}{Remark}
\numberwithin{equation}{section}
\newcommand{\set}[1]{\left\{#1\right\}}
\newcommand{\R}{\mathbb R}
\newcommand{\Z}{\mathbb{Z}}
\newcommand{\A}{\mathcal{A}}
\newcommand{\C}{\mathcal{C}}
\newcommand{\F}{\mathcal{F}}
\newcommand{\Rd}{\R^d}
\title{Differentiability of functions in the Zygmund class}
\author{Juan Jes{\'u}s Donaire, Jos{\'e} G.\ Llorente and Artur Nicolau}
\thanks{The authors are supported in part by the grants MTM2008-05561, 2009SGR1303, MTM2008-00145 and 2009SGR420}
\begin{document}

\maketitle
\section{Introduction}

Given two parameters $b>1$ and $0<\alpha\leq 1$, in 1876,
 Weierstrass considered the functions given by the series
\begin{equation*}
\sum_{n=0}^\infty b^{-n\alpha} \cos (2\pi b^n x), \qquad x\in \R,
\end{equation*}
and proved that they are continuous and nowhere differentiable if
the parameters satisfy $b^{1-\alpha} \geq 1+3\pi/2$. After a
series of contributions by Dini, Bromwich, Hadamard and others, in
1916, Hardy gave a proof under the optimal assumptions $b>1$ and
$0<\alpha \leq 1$. See \cite{Ha}. The critical case $\alpha =1$ is
the hardest one and it corresponds to the function
\begin{equation}\label{eqq1}
f_b(x)=\sum_{n=0}^\infty b^{-n} \cos (2\pi b^n x), \qquad x\in \R,
\end{equation}
which is in the Zygmund class. The Zygmund class $\Lambda_* (\R^d)$
is the space of bounded continuous functions $f\colon \R^d \to \R$
for which
\begin{equation*}
\| f \|_* =\sup  \{ \frac{|f(x+h)+f(x-h) -2f(x)|}{\|h\|} : x, h \in
\R^d \} < \infty.
\end{equation*}
The small Zygmund class $\lambda_* (\R^d)$ is the subspace formed
by those functions $f\in \Lambda_* (\R^d)$ which satisfy
\begin{equation*}
\lim_{\|h\|\to 0} \sup_{x\in \R^d}  \frac{|f(x+h)+f(x-h)
-2f(x)|}{\|h\|}=0.
\end{equation*}
These spaces were introduced by Zygmund in the forties when he
observed that the conjugate function of a Lipschitz function in the
unit circle does not need to be Lipschitz but it is in the Zygmund
class~\cite{Z1}. For $0<\alpha \leq 1$, let $\Lambda_\alpha (\Rd)$
be the Holder class of bounded functions $f\colon \R^d \to \R$ for
which there exists a constant $C= C(f)$ such that $|f(x+h)-f(x)| \le
C \|h \|^\alpha$, for any $x, h \in \Rd$. It is well known that
$\Lambda_1(\Rd) \subset \Lambda_* (\Rd) \subset \Lambda_\alpha
(\Rd)$ for any $0<\alpha <1$ and actually the Zygmund class
$\Lambda_*(\Rd)$ is the natural substitute of $\Lambda_1(\Rd)$ in
many different contexts. For instance, the Hilbert transform of a
compactly supported function in $\Lambda_1(\Rd)$ may not be in
$\Lambda_1(\Rd)$, while standard Calder{\'o}n-Zygmund operators map
compactly supported functions in $\Lambda_*(\Rd)$ ( respectively in
$\Lambda_\alpha(\Rd)$ for some fixed  $0<\alpha <1$ ) into
$\Lambda_*(\Rd)$ ( respectively into $\Lambda_\alpha(\Rd)$ ). See
\cite{FJW}. The Zygmund class can also be described in terms of
harmonic extensions, Bessel potentials or best polynomial
approximation and again it is the natural substitute of the
Lipschitz class $\Lambda_1 (\Rd)$ in these contexts. See \cite{Z1}
and Chapter~5 of \cite{S}.

A classical result of Rademacher says that any function in
$\Lambda_1 (\Rd)$ is differentiable at almost every point. However
functions in the Zygmund class as the Hardy-Weierstrass function
$f_b$ given in \eqref{eqq1}, may not be differentiable at any point.
More generally, let $g$ be an almost periodic function of class
$\C^2$ in the real line. Then for any $b>1$, the function
\begin{equation*}
f(x)=\sum_{n=0}^\infty b^{-n} g (b^n x), \qquad x\in \R,
\end{equation*}
is in the Zygmund class $\Lambda_* (\R)$ and under mild assumptions
on the function $g$,  Heurteaux has proved that $f$ is nowhere
differentiable \cite{He}. It is worth mentioning that if we allow
having an infinite derivative, then every Zygmund class function
defined in the real line has derivative on a set of points of
Hausdorff dimension 1. See \cite[p.~237]{P} .

In the sixties, Stein and Zygmund proved a series of nice results
relating differentiability properties of functions with the size of
certain natural square functions. Let $f$ be a measurable function
defined in an open set $\Omega \subset \Rd$. Then the set of points
of $\Omega$ where $f$ is differentiable coincides, except at most
for a set of Lebesgue measure zero, with the set of points $x\in
\Omega$ for which there exists $\delta =\delta (x)>0$ such that the
following two conditions hold
\begin{equation*}
\begin{split}
& \sup_{\|h\| <\delta} \frac{|f(x+h)+f(x-h)-2f(x)|}{\|h\|}<\infty,
\\*[5pt] & \int_{\|h\| <\delta}
\frac{|f(x+h)+f(x-h)-2f(x)|^2}{\|h\|^{d+2}}\, dm (h)<\infty.
\end{split}
\end{equation*}
Here $dm$ denotes Lebesgue measure in $\Rd$. Hence,
differentiability can be \linebreak described, modulo sets of
Lebesgue measure zero, by the size of a quadratic \linebreak
expression involving second differences. See \cite[p.~262]{S}.
However these nice results do not apply in the situation we will
consider in this paper where we study differentiability properties
of functions in sets which may have Lebesgue measure zero.

Let $f_b$ be the Weierstrass function given by \eqref{eqq1}. As
mentioned before $f_b \in \Lambda_* (\R)$ and one can see that
\begin{equation*}
\limsup_{h\to 0} \frac{|f_b(x+h)-f_b(x)|}{|h|} = \infty
\end{equation*}
at almost every point $x\in \R$. Actually a Law of the Iterated
Logarithm governs the growth of the divided differences of $f_b$
(see \cite{AP}). Similarly there exist functions in the Small
Zygmund class which are differentiable at almost no point. However
it was already observed by Zygmund in \cite{Z1} that any function in
$\lambda_* (\R)$ is differentiable at a dense set of points of the
real line. Similarly a function in the Zygmund class $\Lambda_*
(\R)$ has bounded divided differences at a dense set of points. In
the eighties, N.\ Makarov proved that Zygmund functions on the real
line have bounded divided differences at sets of Hausdorff dimension
one. See \cite{Ma2} and also \cite{AAP}, \cite{R1} and \cite{R2} for
related results.

\begin{thmA}[Makarov]\

(a) Let $f\in \Lambda_* (\R)$. Then the set
\begin{equation*}
\left \{ x\in \R \, \colon \, \limsup_{h\to 0}
\frac{|f(x+h)-f(x)|}{|h|}<\infty\right \}
\end{equation*}
has Hausdorff dimension $1$.

(b) Let $f\in \lambda_* (\R)$. Then the function $f$ is
differentiable at a set of points of Hausdorff dimension $1$.
\end{thmA}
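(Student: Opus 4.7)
The strategy I would follow is to discretize the problem via dyadic decomposition and convert the question into the behavior of a process on a binary tree. For each dyadic interval $I=[x_I, x_I+|I|]\subset [0,1]$ set
$$D_I = \frac{f(x_I+|I|)-f(x_I)}{|I|}.$$
If $I_0, I_1$ denote the left and right children of $I$ and $m$ is the midpoint of $I$, a direct computation gives $D_I=(D_{I_0}+D_{I_1})/2$ and
$$|D_{I_1}-D_{I_0}| = \frac{2\,|f(x_I+|I|)+f(x_I)-2f(m)|}{|I|} \leq \|f\|_*,$$
the inequality being the Zygmund condition at $m$. For $f\in\lambda_*$ this difference goes to zero uniformly as $|I|\to 0$. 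A short argument comparing general increments to dyadic ones (again via the Zygmund inequality) shows that boundedness of the sequence $(D_{I_n(x)})$ along the dyadic ancestors $I_n(x)$ of $x$ is equivalent to $\limsup_{h\to 0}|f(x+h)-f(x)|/|h|<\infty$, and convergence of $(D_{I_n(x)})$ is equivalent to differentiability of $f$ at $x$ when $f\in\lambda_*$.

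The problem thus reduces to producing a set of Hausdorff dimension $1$ of branches of the dyadic tree along which the ``martingale'' $(D_{I_n})$ stays bounded (or Cauchy). Fix $M\gg\|f\|_*$ and let $\mathcal{T}_M$ be the subtree of dyadic intervals with $|D_I|\leq M$. Because $D_I$ is the average of its children's values and they differ by at most $\|f\|_*$, at least one child of any node in $\mathcal{T}_M$ lies in $\mathcal{T}_M$; moreover, whenever $|D_I|\leq M-\|f\|_*$ \emph{both} children lie in $\mathcal{T}_M$. Hence the tree $\mathcal{T}_M$ has full binary branching at every node except those in the ``critical stratum'' $|D_I|\in (M-\|f\|_*,M]$, and its boundary $E_M$ is a candidate set for part (a).

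The main obstacle is to make the Hausdorff dimension estimate rigorous and push it all the way to $1$. My plan is a mass distribution argument: define a probability measure $\mu$ on $E_M$ by recursively splitting mass equally between surviving children, estimate $\mu(I)$ from above, and apply Frostman's lemma. The estimate requires controlling the proportion of levels spent in the critical stratum, which amounts to a large deviation type bound for the bounded-step process $(D_{I_n})$; taking $M$ large should force this proportion to be small, giving $\dim_H E_M \geq 1-\eps$ for any prescribed $\eps>0$, and a countable union then yields a dimension $1$ set, establishing (a). For part (b), the extra condition $|D_{I_1}-D_{I_0}|\to 0$ in $\lambda_*$ can be used to upgrade boundedness to Cauchy: by choosing at each node the child whose $D$ lies closer to a prescribed target, one produces branches where $(D_{I_n(x)})$ converges, and the same dimension estimate then yields differentiability of $f$ on a set of Hausdorff dimension $1$.
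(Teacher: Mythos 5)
Your reduction to the dyadic martingale $D_I$, with $D_I=(D_{I_0}+D_{I_1})/2$ and $|D_{I_1}-D_{I_0}|\le\|f\|_*$, is correct and is exactly the one--dimensional specialization of the paper's Lemma~\ref{lema2} and Corollary~\ref{coro1} (the paper does not itself prove Theorem A --- it is quoted from Makarov --- but its Proposition~\ref{propo1} is the $d$-dimensional version of the step you are missing). The gap is in the dimension estimate. Splitting mass equally among the surviving children of $\mathcal{T}_M$ gives $\mu(I_n)=2^{-b_n}$, where $b_n$ is the number of ancestors of $I_n$ at which \emph{both} children survive, and the Frostman bound $\mu(I)\le C\,|I|^{1-\eps}$ requires $b_n\ge (1-\eps)n$ along \emph{every} branch of the surviving tree. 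Nothing in your construction rules out branches that enter the critical stratum $|D_I|\in(M-\|f\|_*,M]$ and remain there for all subsequent levels; along such a branch $b_n$ may be $o(n)$, the measure you built violates the Frostman condition at every positive exponent, and the argument yields no lower bound beyond $0$. The proposed ``large deviation type bound'' cannot repair this: Azuma-type estimates control the Lebesgue measure of $\{x:\max_{k\le n}|D_{I_k(x)}|\ge\lambda\}$, whereas the set you must analyze typically has Lebesgue measure zero, and once you condition on survival in $\mathcal{T}_M$ the process $(D_{I_n})$ is no longer a martingale, so there is no stochastic structure left to which such a bound could apply.

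The missing mechanism is a stopping-time use of the exact mean value property. Assuming (as one may) that the divided differences are a.e.\ unbounded, let $\{I_j\}$ be the maximal dyadic subintervals of $I$ with $|D_{I_j}-D_I|\ge N$; they exhaust $I$ up to length zero, and telescoping $f$ (with a small covering of the leftover set) gives $\sum_j|I_j|\,(D_{I_j}-D_I)=O(|I|)$, while maximality forces $|D_{I_j}-D_I|\in[N,N+\|f\|_*/2]$. Hence the intervals on which $D$ has \emph{decreased} by about $N$ must carry a fixed proportion (essentially half) of $|I|$; taking $N=\eps M$ and starting from $D_I\approx M$, these good intervals satisfy $|D_{I_j}|<M$ again, have length at most $2^{-cN}|I|$, and carry total length at least $c\,|I|$. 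Iterating and applying the paper's Lemma~\ref{lema1} with $s=1$, $\eta_0=2^{-cN}$, $K_0=c$ gives dimension at least $1-C/N\to 1$. This is precisely Proposition~\ref{propo1} (conditions (a)--(d)) in dimension one, and it is the step your proposal replaces with the unsupported claim about the proportion of critical levels. Part (b) has the same defect: greedy child selection toward a target produces convergent branches but not the branching-rate lower bound needed for the dimension estimate; there, too, one runs the stopping-time argument with radii $M_0(\delta)\to0$ as in the paper's proof of Theorem~\ref{teo1}(b).
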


The main purpose of this paper is to study the situation in higher
dimensions. Given a function $f\in \Lambda_* (\R^d)$ and a unit
vector $e \in \Rd$, let $E(f, e)$ be the set of points where the
divided differences of $f$ in the direction of $e$ are bounded, that~
is,
\begin{equation*}
E(f,e)=\left \{ x\in \Rd \, \colon \, \limsup_{\R \,\ni\, t\to 0}
\frac{|f(x+te)-f(x)|}{|t|}<\infty\right \}.
\end{equation*}
There exist functions $f\in \Lambda_* (\R^d)$ such that, for any
unit vector $e \in \R^d $, the set $E(f,e)$ has Lebesgue measure
zero. However the one dimensional result of  Makarov gives that for
any function $f\in \Lambda_* (\R^d)$ and any fixed unit vector $e
\in \Rd$, the set $E(f,e)$ has Hausdorff dimension $d$. See also
\cite{Ll} and \cite{N}. Similarly for a function $f\in \lambda_*
(\R^d)$ and a unit vector $e\in \Rd$, the set
\begin{equation*}
\left \{ x\in \Rd \, \colon \, \lim_{\R \,\ni\, t\to 0}
\frac{f(x+te)-f(x)}{t}  \textrm{ exists }\right \}
\end{equation*}
may have Lebesgue measure zero but it has Hausdorff dimension $d$.
For a \linebreak fixed direction $e$, the divided differences in
this direction, $ (f(x+te)-f(x)) /t,\break x \in \Rd, $ satisfy a
certain mean value property with respect to Lebesgue measure in
$\Rd$. This is the main point in the proof of Makarov's result as
well as in the arguments leading to the fact that $\dim E(f,e)=d$.
In this paper we want to study the size of the set $E(f)$ of
points where the divided differences in any direction are
simultaneously bounded, that is,
\begin{equation*}
E(f)=\left \{ x\in \Rd \, \colon \, \limsup_{\|h\|\to 0}
\frac{|f(x+h)-f(x)|}{\|h\|}<\infty\right \}.
\end{equation*}
Let $\{e_i \, \colon \, i=1, \dots,d\}$ be the canonical basis of
$\Rd$. If $f\in \Lambda_*(\Rd)$ it turns out that
\begin{equation*}
E(f)=\bigcap_{i=1}^d E (f, e_i).
\end{equation*}
So, the main difficulty in the higher dimensional situation is to
obtain a simultaneous control of the divided differences in
different directions $e_i$, $ i=1,\dots,d$. The first main result
of this paper is the following.

\begin{thm} \label{teo1}\

(a) Let $f$ be a function in $\Lambda_* (\Rd)$. Then the set
$E(f)$ has Hausdorff dimension bigger or equal to $1$.

(b) Let $f$ be a function in $\lambda_* (\Rd)$. Then $f$ is
differentiable at a set of points of Hausdorff dimension bigger or
equal to $1$.
\end{thm}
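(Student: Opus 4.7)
\medskip\noindent\textbf{Proof plan.}\ Observe first that for $f\in\Lambda_*(\Rd)$ one has the identity $E(f)=\bigcap_{i=1}^d E(f,e_i)$, and that Makarov's one-dimensional theorem, applied to each axis-parallel slice, already gives $\dim_H E(f,e_i)=d$ for every $i$. However, a finite intersection of sets of full Hausdorff dimension in $\Rd$ may be empty, so Theorem~A cannot be invoked as a black box: the whole point is to produce a construction controlling all $d$ coordinate directions at a common set of points simultaneously.

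The plan is to perform a direct dyadic Cantor construction in $\Rd$, in the spirit of the martingale argument underlying Makarov's theorem. Starting from a unit cube $Q_0$ and proceeding scale by scale, I would maintain at each stage a family $\F_n$ of dyadic sub-cubes of side $2^{-n}$ on which, for every coordinate direction $e_i$, the second difference $\Delta_i^n f(x)=f(x+2^{-n}e_i)+f(x-2^{-n}e_i)-2f(x)$ is pointwise controlled by a quantity $c_n\,2^{-n}$ with $\sum_n c_n<\infty$. The Zygmund estimate supplies the uniform bound $|\Delta_i^n f|\le \|f\|_*\,2^{-n}$, and a Chebyshev argument applied inside each parent cube bounds the proportion of the $2^d$ children on which any of the $d$ constraints fails. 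Provided the thresholds are tuned so that at each stage at least two children survive per parent, a standard mass-distribution lemma produces a Cantor set $K=\bigcap_n\bigcup_{Q\in\F_n}Q$ of Hausdorff dimension at least $1$.

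At every $x\in K$, the selection forces the telescoping sum $\sum_n 2^{n}|\Delta_i^{n+1}f(x_n)|$ (over the dyadic ancestors $x_n$ of $x$) to converge, for each coordinate direction $e_i$. This is precisely the quantity controlling the difference of consecutive dyadic divided differences $(f(x+2^{-n}e_i)-f(x))/2^{-n}$, so they remain bounded as $n\to\infty$. Combining with the Zygmund bound at non-dyadic scales yields $\limsup_{t\to 0}|f(x+te_i)-f(x)|/|t|<\infty$ for every $i$, hence $x\in E(f)$, which proves part~(a). For part~(b), the same scheme is run with the uniform constant $\|f\|_*$ replaced by the small-Zygmund modulus $\omega(2^{-n})\to 0$; the telescoping sums now tend to zero, upgrading boundedness to convergence of the divided differences and giving a genuine derivative in each coordinate direction. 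Together with the Zygmund estimate in arbitrary directions, this delivers full differentiability of $f$ on $K$.

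The principal obstacle is performing the Chebyshev selection simultaneously for all $d$ coordinate directions without losing too many cubes per stage, since each additional direction imposes an independent constraint on the surviving children. This is exactly the reason why the dimension bound cannot exceed $1$ by this method and why the branching has to be finely tuned at every scale so as to keep the product of surviving proportions large enough to force Hausdorff dimension one while still enforcing the summability $\sum c_n<\infty$.
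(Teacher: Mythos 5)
Your reduction to controlling the coordinate divided differences along dyadic ancestors, and the final passage from dyadic to arbitrary scales (and from directional control to full differentiability) via the Zygmund estimate, are sound and correspond to what the paper does through the discrete gradient $V(Q)$, Lemma~\ref{lema2} and Corollary~\ref{coro1}. The gap is in the selection mechanism. You ask for $|\Delta_i^{n}f|\le c_n 2^{-n}$ with $\sum_n c_n<\infty$ and propose to retain, by Chebyshev, a positive fraction of the $2^d$ children of each parent. But the only average the Zygmund hypothesis supplies is $\frac{1}{|P|}\int_P|\Delta_i^{n}f|\,dm\le\|f\|_*\,2^{-n}$, with no decay in $n$; Chebyshev then bounds the bad proportion by $\|f\|_*/c_n$, which exceeds $1$ as soon as $c_n\le\|f\|_*$ and is therefore vacuous exactly in the summable regime you need. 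No tuning escapes this: with constant thresholds every child survives but the conclusion reduces to the Zygmund condition itself (the increments of consecutive dyadic divided differences are $\pm O(1)$ per scale and may accumulate, as they do for the Weierstrass function, whose second differences are comparable to $2^{-n}$ on a fixed proportion of every cube at every scale); with summable thresholds the construction may retain no cube after finitely many generations. Your criterion is also stronger than what part (a) asserts: summability of the increments forces \emph{convergence} of the dyadic divided differences, essentially differentiability, which fails at every point for some $f\in\Lambda_*(\Rd)$. For part (b) the same issue reappears, since $\omega(2^{-n})\to0$ does not make $\sum_n\omega(2^{-n})$ converge; the paper instead works along a sparse sequence of scales $N(n)$ chosen so that $\sum_n M_0(2^{-N(n)})<\infty$.

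What is actually needed, and what the paper supplies, is a cancellation argument rather than a smallness argument: one tracks the vector-valued running sum $V(Q_n(x))$ and stops at the maximal subcubes where $\|V(Q_j)-V(Q)\|$ first reaches $\varepsilon M$. The key new ingredient (Proposition~\ref{propo1}) is a one-dimensional mean value property obtained by integrating the discrete gradient along a polygonal path crossing $Q$ roughly in the direction of $V(Q)$: it shows that, when measured with respect to \emph{length} rather than volume, a definite proportion of the stopping cubes have $V(Q_j^*)$ in a cone pointing back toward the origin, so that $\|V(Q_j^*)\|\le C_2(\varepsilon)M<M$. Iterating this and distributing mass with respect to $\ell(Q)^{1}$ (Lemma~\ref{lema1} with $s=1$) yields a Cantor set of dimension at least $1-\log_2 C_0/N\to1$. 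In particular, the reason the bound is $1$ and not $d$ is not a union bound over the $d$ coordinate directions, as you suggest, but the fact that the mean value property, and hence the lower bound in condition (c), is only available with respect to length along a curve.
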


The result is local in the sense that given $f\in \Lambda_*(\Rd)$
and a cube $Q\subset \Rd$ the set $E(f)\cap Q$ has Hausdorff
dimension bigger or equal $1$. Similarly given $f\in \lambda_*
(\Rd)$ and a cube $Q\subset \Rd$, the function $f$ is
differentiable at a set of points in the cube $Q$ which has
Hausdorff dimension bigger or equal to $1$.

The proof of this result consists on constructing a Cantor type
set on which the function $f$ has bounded divided differences. The
construction of the Cantor type set uses an stopping time argument
based on a certain one dimensional mean value property that the
divided differences of $f$ satisfy. Roughly speaking, the divided
differences distribute their values in a certain uniform way when
measured with respect to length. This is the main new idea in the
proof and it allows us to obtain a simultaneous control of the
behavior of the divided differences in the coordinate directions.
Moreover the result is sharp in the following sense.

\begin{thm}\label{teo2}
There exists a function $f$ in the small Zygmund class
$\lambda_*(\Rd)$ such that the set $E(f)$ has Hausdorff dimension
$1$.
\end{thm}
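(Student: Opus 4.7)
The plan is to construct explicitly a function $f\in\lambda_*(\Rd)$ with $\dim_H E(f)\le 1$; combined with Theorem~\ref{teo1} this yields $\dim_H E(f)=1$. Naive lifts from dimension one fail: if $g\in\lambda_*(\R)$ is a Makarov-type example with $\dim_H E(g)=1$, then $f(x)=g(x_1)$ gives $E(f)=E(g)\times\R^{d-1}$ of dimension $d$; a separable sum $f(x)=\sum_i g_i(x_i)$ gives $E(f)=\prod_i E(g_i)$, which by Marstrand's product inequality has dimension at least $d$; and a plane-wave $f(x)=g(u\cdot x)$ again yields $E(f)$ of dimension $d$. The construction must therefore be genuinely multidimensional.

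My proposal is a lacunary trigonometric series whose frequency vectors rotate through $S^{d-1}$:
\begin{equation*}
f(x)=\sum_{n\ge 1} \epsilon_n\,\lambda_n^{-1}\cos(2\pi\lambda_n u_n\cdot x),
\end{equation*}
with $\lambda_{n+1}/\lambda_n\to\infty$, $\epsilon_n\downarrow 0$ slowly, and unit vectors $u_n\in\R^d$ chosen so that every coordinate direction receives contributions at a positive density of indices $n$. The hypothesis $\epsilon_n\to 0$ together with standard lacunary second-difference estimates (splitting the sum at frequency $1/\|h\|$) gives $f\in\lambda_*(\Rd)$. Since $f\in\Lambda_*(\Rd)$, one has $E(f)=\bigcap_{i=1}^d E(f,e_i)$, and, up to a vanishing high-frequency tail, the divided difference of $f$ in the direction $e_i$ at $x$ equals a partial sum of $-2\pi\sum_n \epsilon_n (u_n)_i \sin(2\pi\lambda_n u_n\cdot x)$. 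Thus $E(f,e_i)$ is essentially the set of $x$ at which this scalar series has bounded partial sums.

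The main obstacle is the upper bound $\dim_H E(f)\le 1$. Observe that the $n$-th contribution to the vector of partial sums across the $d$ coordinate directions is $\epsilon_n u_n \sin(2\pi\lambda_n u_n\cdot x)\in\R^d$, so $\bigcap_{i=1}^d E(f,e_i)$ is essentially the set of $x$ where the $\R^d$-valued lacunary series $\sum_n \epsilon_n u_n \sin(2\pi\lambda_n u_n\cdot x)$ has bounded partial sums. I would estimate its Hausdorff dimension by a multiscale covering along the scales $\lambda_n^{-1}$: with $(u_n)$ suitably distributed over $S^{d-1}$, the requirement of simultaneous bounded partial sums in every coordinate at step $n$ confines the phase $\lambda_n u_n\cdot x\bmod 1$ to a prescribed discrete set, cutting the count of cubes of side $\lambda_n^{-1}$ meeting $E(f)$ from the generic $\Theta(\lambda_n^d)$ down to $O(\lambda_n)$. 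A Cantor-type iteration along the lacunary scales then gives $\dim_H E(f)\le 1$.
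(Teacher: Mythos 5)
Your candidate is precisely the one the paper rules out: immediately after the statement of Theorem~\ref{teo2} the authors remark that natural lacunary series in $\Lambda_*(\Rd)$ satisfy $\dim E(f)=d$ (citing \cite{DLlN}), so the upper bound $\dim_H E(f)\le 1$ that your argument needs is false for this $f$. The gap is in the covering count. The $n$-th increment of your vector-valued partial sum, $\epsilon_n u_n\sin(2\pi\lambda_n u_n\cdot x)$, depends on $x$ only through the single scalar phase $\lambda_n u_n\cdot x$. First, boundedness of the partial sums imposes no constraint term by term: each increment already has norm at most $\epsilon_n\to 0$, so nothing forces the phase into a discrete set at step $n$; the only constraint is on the accumulated sum, and by Salem--Zygmund/Makarov theory the set where a lacunary sum with $\sum\epsilon_n^2=\infty$ has bounded partial sums has measure zero but \emph{full} dimension. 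Second, even granting a discrete phase constraint, the admissible set at scale $\lambda_n^{-1}$ is a union of $\sim\lambda_n$ parallel hyperplane slabs with spacing $\lambda_n^{-1}$, each meeting $\sim\lambda_n^{d-1}$ cubes of that side in the unit cube, for a total of $\sim\lambda_n^{d}$ cubes --- not $O(\lambda_n)$. A single codimension-one constraint per scale, in a single direction $u_n$ per scale, cannot drive the dimension below $d$.

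The mechanism that works, and that the paper uses, is different in kind: build $f=\sum g_k$ recursively so that at every point $x$ outside a small exceptional set the new gradient $\nabla g_{k+1}(x)$ is almost orthogonal to the accumulated gradient $\nabla f_k(x)$ and has norm comparable to $\varepsilon_k$, whence $\|\nabla f_k(x)\|^2\gtrsim\sum_{j\le k}\varepsilon_j^2\to\infty$ there. This orthogonality condition depends on $x$ through the field $\nabla f_k$, so it cannot be met by plane waves whose direction of oscillation $u_n$ is fixed in advance independently of $x$; the paper instead uses bump functions on a fine dyadic grid to let the direction of oscillation of $g_{k+1}$ vary from cube to cube, following the orthogonal complement of $\nabla f_k$. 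The exceptional set $A_k$ (where the bumps are not identically $1$ or the oscillating profile is not at full slope) is arranged, via $d-1$ translated grids and transversality of the level hyperplanes, to be covered by cubes with a one-dimensional length estimate; then $E(f)\subseteq A=\bigcap_j\bigcup_{k\ge j}A_k$ has $\sigma$-finite length, and the lower bound $\dim E(f)\ge 1$ comes from Theorem~\ref{teo1}. If you wish to salvage a Fourier-flavored construction, you would have to make the frequency vector of the $k$-th block a function of $x$ adapted to $\nabla f_{k-1}(x)$, which is essentially what the paper's construction does.
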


The one dimensional case may suggest that a natural candidate for
the function $f$ in Theorem~\ref{teo2}
is a lacunary series. However this is not the case. Actually it
turns out that natural lacunary series $f$ in $\Lambda_* (\Rd)$
satisfy $\dim E (f)=~d$  (\cite{DLlN}). Instead, the function $f$
will be constructed as $f=\sum g_k$, where $\{g_k\}$ will be a
sequence of smooth functions defined recursively with $\sum \|g_k
\|_\infty < \infty$. The main idea is to construct them
 in such a way that
$\nabla g_{k+1} (x)$ is \emph{almost orthogonal} to $\nabla
\sum_{j=1}^k g_j (x)$ and $\sum \| \nabla g_{k} (x) \|^2 = \infty$
for \emph{most} points $x\in \Rd$. Since one can not hope to achieve
both requirements at all points $x\in\Rd$, an exceptional set $A$
appears. It turns out that the function $f$ is in the Small Zygmund
class and it is not differentiable at any point in $\R^d \setminus
A$. The construction provides the convenient  \emph{one dimensional}
estimates of the size of the set $A$.

The paper is organized as follows. Next section contains a result on
Hausdorff dimension of certain Cantor sets which will be used in the
proof of Theorem 1. The third section is devoted to the proof of
Theorem~\ref{teo1}. Theorem~\ref{teo2} is proved in Section 4 while
last section is devoted to martingale versions of our results.

The letter $C$ will denote a constant, whose value may change from
line to line, only depending on the dimension $d$. Similarly $C(N)$
denotes a constant depending on the parameter $N$ and the dimension.

It is a pleasure to thank David Preiss for an illuminating idea
concerning Theorem~\ref{teo2}.

\section{Hausdorff dimension of Cantor type sets}

For $n=1,2,\dots$, let ${\mathcal{D}}_n$  be the family of pairwise
disjoint dyadic cubes of the form
$$\left[\frac{k_1}{2^n},\frac{k_1+1}{2^n}\right)\times\cdots\times
\left[\frac{k_d}{2^n},\frac{k_d+1}{2^n}\right),$$ where
$k_1,\ldots,k_d$ are integers. For  $x\in\R^d$, let $Q_n(x)$ be the
unique dyadic cube of ${\mathcal{D}}_n$ which contains the point
$x$.

Given a number $0 \le \alpha \leq d$, the Hausdorff
$\alpha$-content of a set $E \subset \Rd$ is defined as
$$
M_{\alpha}(E)=\inf\biggl\{\sum_j \ell(Q_j)^{\alpha} \biggr\}
$$
where the infimum is taken over all coverings of $E$ by cubes
$\{Q_j\}$ and $\ell(Q_j)$ denotes the sidelength of $Q_j$. The
Hausdorff dimension of $E$ is defined as $\dim
E=\inf\set{\alpha\geqslant 0:M_\alpha(E)=0} $. Next result will be
used to compute the Hausdorff dimension of certain sets appearing
in Theorem ~\ref{teo1}. We will only use the case $s=1$. In
dimension $d=1$ the result can be found in \cite{Hu} or
\cite{Ma1}.

\begin{lem}\label{lema1}
Fix $0< s \leq d$. For $n=1,2,\dots$, let $\A(n)$ be a collection of
closed dyadic cubes in $\Rd$ with pairwise disjoint interiors.
Assume the families are nested, that is,
\begin{equation*}
\bigcup_{Q\in \A(n+1)} Q \subseteq \bigcup_{Q\in \A(n)} Q.
\end{equation*}
Assume also that there exist two constants $0<\eta_0 < K_0 <1$,
satisfying
\begin{enumerate}[(a)]
\item For any $Q\in \A (n+1)$ with $Q\subset \widetilde Q$ for a
certain  $\widetilde Q \in \A(n)$, one has $\ell (Q)\leq \eta_0
\,\ell(\widetilde Q)$. \item For any $\widetilde Q \in \A(n)$ one
has
\begin{equation*}
\sum \ell (Q)^s\geq K_0 \ell (\widetilde{Q})^s,
\end{equation*}
where the sum is taken over all $Q\in \A (n+1)$ contained in
$\widetilde Q$.

\item Furthermore, assume that there exists a constant $\widetilde
K>0$ such that for any cube $Q \subset \Rd$ and any $n=1,2,\dots$
one has
$$\sum \ell (\widetilde Q)^s \leq \widetilde K \ell (Q)^s \, , $$
where the sum is taken over all cubes $\widetilde Q$ in $\A(n)$
contained in $Q$.

Then
\begin{equation*}
\dim \left ( \bigcap_{n=1}^\infty \bigcup_{Q\in \A(n)} Q\right )
\geq s-\frac{\log K_0}{\log \eta_0}\,.
\end{equation*}
\end{enumerate}
\end{lem}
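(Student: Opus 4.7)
The plan is to apply the mass distribution principle. I would construct a probability measure $\mu$ supported on $E := \bigcap_{n\ge 1}\bigcup_{Q\in\A(n)} Q$ satisfying the Frostman-type bound
\[
\mu(Q) \le C\,\ell(Q)^{s-\beta} \qquad \text{for every cube } Q\subset\Rd,
\]
where $\beta := \log K_0/\log \eta_0$. Given such a measure, any covering $\{Q_j\}$ of $E$ yields $1 = \mu(E) \le \sum_j \mu(Q_j) \le C\sum_j \ell(Q_j)^{s-\beta}$, hence $M_{s-\beta}(E) > 0$ and $\dim E \ge s-\beta$, the desired conclusion. One may reduce without loss of generality to the case where $\A(1)$ consists of a single cube $Q_0$ by summing over the (finitely many) top-level cubes.

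The measure $\mu$ is built inductively by redistributing mass proportionally to $\ell(\cdot)^s$. Set $\mu(Q_0)=1$. If $\mu(\widetilde Q)$ has been defined for $\widetilde Q\in\A(n)$, then for each $Q\in\A(n+1)$ with $Q\subset\widetilde Q$ define
\[
\mu(Q) := \mu(\widetilde Q)\,\frac{\ell(Q)^s}{\sum_{Q'\in\A(n+1),\,Q'\subset\widetilde Q}\ell(Q')^s},
\]
and extend to a Borel measure on $E$ in the standard way. By hypothesis (b) the denominator is at least $K_0\ell(\widetilde Q)^s$, so iterating along an ancestor chain $Q=Q_n\subset Q_{n-1}\subset\cdots\subset Q_1=Q_0$ gives $\mu(Q_n) \le C\,K_0^{-(n-1)}\,\ell(Q_n)^s$. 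Hypothesis (a) yields $\ell(Q_n)\le \eta_0^{n-1}\ell(Q_0)$, equivalently $n-1 \le \log(\ell(Q_0)/\ell(Q_n))/\log(1/\eta_0)$, which rewrites $K_0^{-(n-1)}$ as at most $C\,\ell(Q_n)^{-\beta}$ with $\beta = \log K_0/\log\eta_0$. Therefore $\mu(Q) \le C\,\ell(Q)^{s-\beta}$ for every $Q\in\bigcup_n\A(n)$.

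For an arbitrary cube $Q\subset\Rd$, choose $n$ large enough that every $\widetilde Q\in\A(n)$ intersecting $Q$ has $\ell(\widetilde Q)\le \ell(Q)$; this is possible by the geometric shrinkage in (a). All such cubes then lie in a fixed dilate, say $3Q$, and
\[
\mu(Q) \le \sum_{\widetilde Q\in\A(n),\,\widetilde Q\subset 3Q}\mu(\widetilde Q) \le C\,\ell(Q)^{-\beta}\!\!\sum_{\widetilde Q\subset 3Q}\ell(\widetilde Q)^s \le C\widetilde K\,\ell(Q)^{s-\beta},
\]
where the final inequality is exactly hypothesis (c). This yields the Frostman estimate on every cube and, via the mass distribution principle, the stated dimension lower bound.

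The main delicate point is this last extension from cubes in $\bigcup_n\A(n)$ to arbitrary cubes $Q$: without hypothesis (c) the measure $\mu$ could concentrate at intermediate scales and violate the Frostman inequality. Condition (c) is precisely the uniform cube-wise $s$-content bound that rules this out, and it is only needed at this step. The inductive redistribution and the extraction of the exponent $\beta$ from (a) and (b) are then routine once the mass distribution strategy is in place.
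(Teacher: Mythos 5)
Your proposal is correct and follows essentially the same route as the paper: the same recursive redistribution of mass proportional to $\ell(\cdot)^s$, the same generation-indexed bound $\mu(Q)\le C K_0^{-(n-1)}\ell(Q)^s$ converted into a Frostman estimate via (a), and the same passage to arbitrary cubes through the dilate $3Q$ and hypothesis (c). The one point to state more carefully is the choice of $n$ in the last step: it must be taken essentially minimal, i.e.\ determined by $\eta_0^{n}\approx\ell(Q)$ (the paper takes $\eta_0^{n+1}\le\ell(Q)\le\eta_0^{n}$), so that the uniform factor $K_0^{-(n-1)}$ is still bounded by $C\,\ell(Q)^{-\beta}$ — merely ``large enough'' would let this factor blow up.
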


It is worth mentioning that although the constant $\widetilde K$
does not appear in the conclusions, in the case $0 < s<d$ one
cannot achieve any non trivial lower bound for the dimension
assuming only (a) and (b). 

\begin{proof}[Proof of Lemma~\ref{lema1}]
The proof is standard and proceeds by constructing a probability
measure  $\mu\!\geq\! 0$ supported in $E=\bigcap_n \bigcup_{Q\in
\A(n)} Q$ for which there exists a constant $C>0$ such that
\begin{equation}\label{eqq21}
\mu (Q)\leq C\ell (Q)^\alpha
\end{equation}
for any dyadic cube $Q$, where $\alpha\!=\!s-\log K_0/\log \eta_0$.
Then it follows that $M_\alpha (E) \!> \!0$ and finishes the
proof.

The measure $\mu$ will be obtained as a weak limit of certain
measures $\mu_n$ which will be defined recursively as follows. Let
$m_{|E}$ denote the Lebesgue measure in $\Rd$ restricted to the
set $E \subset \Rd$. Without loss of generality we can assume that
$\A(1)$ consists of the unit cube $Q^{(1)}$. We choose
$\mu_1=m_{|Q^{(1)}}$. Fix a positive integer $n$ and assume by
induction that $\mu_{n}$ has been defined. Then consider the
family $\A(n+1)$ and define
\begin{equation*}
\mu_{n+1} =\sum_{Q\in \A(n+1)} \frac{\mu_{n+1} (Q)}{m(Q)}\,m_{|Q},
\end{equation*}
where the masses $\mu_{n+1} (Q)$ are defined as follows. Given $Q\in
\A(n+1)$ let $\widetilde Q$ be the dyadic cube in the previous
family $\A(n)$ which contains $Q$. Also, let $\A(\widetilde Q)$ be~
the~family of dyadic cubes in $\A(n+1)$ contained in $\widetilde Q$.
Then $\mu_{n+1} (Q)$ is defined by the relation
\begin{equation*}
\frac{\mu_{n+1} (Q)}{\ell (Q)^s}=\frac{\mu_{n} (\widetilde
Q)}{\sum_{Q^* \in \A(\widetilde Q)} \ell (Q^*)^s}.
\end{equation*}
Observe that
\begin{equation}\label{eqq22}
\sum_{Q\in \A(\widetilde Q)} \mu_{n+1} (Q) =\mu_{n} (\widetilde Q).
\end{equation}
Hence $\mu_{n+1} (\Rd)\!=\!\mu_{n} (\Rd)$ and iterating we deduce
that $\mu_n$ are probability measures. Let $\mu$ be a weak limit
of $\mu_n$. Then $\mu$ is a probability measure supported in
$\bigcap_n\bigcup_{Q\in \A(n)} Q$. The rest of the proof is
devoted to show the growth condition~\eqref{eqq21}. Observe that
\eqref{eqq22} tells that $\mu (Q)=\mu_n (Q)$ for any cube $Q\in
\A(n)$. Let $Q\in \A(n+1)$. As before we denote by $\widetilde Q$
the cube in the family $\A(n)$ with $Q\subset \widetilde Q$ and by
$\A(\widetilde Q)$ the family of dyadic cubes in $\A(n+1)$
contained in $\widetilde Q$. Then \eqref{eqq22} and property (b)
yield
\begin{equation*}
\frac{\mu (Q)}{\ell (Q)^s}=\frac{\mu_{n+1} (Q)}{\ell (Q)^s}=
\frac{\mu_n (\widetilde Q)}{\sum_{Q^* \in \A(\widetilde Q)} \ell (
Q^*)^s} \leq \frac{1}{K_0}\, \frac{\mu_n (\widetilde Q)}{\ell
(\widetilde Q)^s}\,.
\end{equation*}
Iterating this inequality we obtain
\begin{equation}\label{eqq23}
\frac{\mu (Q)}{\ell (Q)^s} \leq K_0^{-n-1}, \qquad Q\in \A(n+1).
\end{equation}
We now prove the estimate \eqref{eqq21}. Let $R$ be a dyadic cube
in $\Rd$, $\, \ell (R)\leq 1$. Choose a positive integer $n$ such
that $\eta_0^{n+1}\leq \ell (R)\leq \eta_0^n$. Then
\begin{equation*}
\mu (R) \leq \sum \mu (Q),
\end{equation*}
where the sum is taken over cubes $Q$ in the family $\A(n+1)$ with
$Q\cap R \neq \emptyset$. Since the cubes in $\A(n+1)$ have
sidelength smaller than $\eta_0^{n+1}$, if $Q\cap R\neq \emptyset$
we deduce that $Q\subset 3R$, where $3R$ is the cube concentric to
$R$ whose sidelength is $3 \ell (R)$. Hence estimate \eqref{eqq23}
tells
\begin{equation*}
\mu (R) \leq K_0^{-n-1} \sum_{Q\in \A(n+1) \atop Q\subset 3R } \ell
(Q)^s
\end{equation*}
and property (c) gives that
\begin{equation*}
\mu (R) \leq K_0^{-n-1} \widetilde K 3^s \ell (R)^s.
\end{equation*}
Now
\begin{equation*}
K_0^{n+1}=\eta_0^{(n+1)\log K_0 /\log \eta_0} \geq \eta_0^{\log
K_0 /\log \eta_0} \ell (R)^{\log K_0/\log \eta_0}
\end{equation*}
and we deduce that
\begin{equation*}
\mu (R)\leq 3^s \widetilde K \eta_0^{-\log K_0 /\log \eta_0} \ell
(R)^{s-\log K_0/\log \eta_0}
\end{equation*}
which gives \eqref{eqq21} and finishes the proof.
\end{proof}

\section{Theorem~\ref{teo1}}

Let $\{e_1, e_2, \ldots , e_d \}$ be the canonical basis of $\Rd$.
Let $Q$ be a cube in $\R^d$ with edges parallel to the coordinate
axis. We will say that $Q$ has the origin at the point $x\in\R^d$
if its vertexes are the points
$x+ \ell(Q)(\varepsilon_1 e_1+\varepsilon_2 e_2+\ldots+ \varepsilon_d e_d)$,
where $\varepsilon_j\in\{0,1\}$, $j=1,\cdots, d$ and $\ell(Q)$ is
the sidelength of $Q$. Given a cube $Q$ in $\R^d$ with origin $x$
and sidelenght $\ell(Q)$ and given a function $f\colon \R^d\to\R$, let
$V(Q)$ be its discrete gradient at the cube $Q$ given by
$$V(Q)=\sum_{j=1}^d \frac{f( x+ \ell(Q) e_j ) -f(x)}{\ell(Q)}
e_j . $$
If $f$ is a function in the Zygmund class the vector
$V(Q)$ behaves as a sort of gradient as the following
lemma shows.

\begin{lem}\label{lema2}
Let $f\colon \R^d\to\R$ be a function in the Zygmund class $\Lambda_* (\R^d)$ and
let $Q$ be a cube in $\R^d$. Assume that $a$ and $b$ are two
points in $\R^d$ for which there is a constant $C>0$ such that
$dist(a,Q)\leq C\ell(Q)$ and $\|b-a\|\leq C\ell(Q)$. Then there
exists a constant $K=K(C,d)$ only depending on $C$ and the
dimension
 $d$ such that
$$\left|f(b)-f(a)-\langle V(Q), b-a \rangle\right|\leq K\, \|f\|_*\,
\ell(Q).$$
\end{lem}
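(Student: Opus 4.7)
The plan is to control $f(b) - f(a) - \langle V(Q), b-a\rangle$ by telescoping along the coordinate directions, reducing each piece to two elementary manifestations of the Zygmund condition: a one-dimensional ``two zeros'' lemma and a ``mixed second difference'' estimate. Let $x$ be the origin of $Q$, write $L = \ell(Q)$ and $v = b - a$, and set $w_1 = a$, $w_{j+1} = w_j + v_j e_j$, so that $w_{d+1} = b$. Then
\[
f(b) - f(a) - \langle V(Q), b-a\rangle = \sum_{j=1}^d A_j, \qquad A_j := \bigl[f(w_j + v_j e_j) - f(w_j)\bigr] - \frac{v_j}{L}\bigl[f(x + Le_j) - f(x)\bigr],
\]
and I split $A_j = B_j + C_j$ by inserting and subtracting $f(x + v_j e_j) - f(x)$, separating a change-of-base-point term $B_j$ from a change-of-scale term $C_j$.

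The scale piece is handled by a one-dimensional lemma: if $h \in \Lambda_*(\R)$ with $h(0) = h(L) = 0$, then $|h(t)| \leq K(C)\|h\|_* L$ for $|t| \leq (C+1)L$. On $[0,L]$ this follows from a dyadic bisection argument: setting $M_n = \max_{0 \leq k \leq 2^n} |h(kL/2^n)|$, the Zygmund inequality at each new dyadic midpoint gives the recursion $M_{n+1} \leq M_n + \|h\|_* L/2^{n+2}$, which telescopes (with $M_0 = 0$) to $\|h\|_* L/2$; the extension to $[-CL,(C+1)L]$ then follows by Zygmund reflection through the zeros at $0$ and $L$. Applied to $\eta_j(t) = f(x+te_j) - f(x) - (t/L)[f(x+Le_j) - f(x)]$, which lies in $\Lambda_*(\R)$ with norm at most $\|f\|_*$ and vanishes at $t = 0$ and $t = L$, this yields $|C_j| = |\eta_j(v_j)| \leq K \|f\|_* L$, since $|v_j| \leq \|v\| \leq CL$.

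The main obstacle is the base-point piece
\[
B_j = f(w_j + v_j e_j) - f(w_j) - f(x + v_j e_j) + f(x) = \Delta_{w_j - x}\Delta_{v_j e_j} f(x),
\]
a mixed ``first-difference-of-first-difference''. Bounding it through the Zygmund modulus of continuity of $y\mapsto f(y+v_je_j) - f(y)$ would cost a logarithmic factor, so instead I would establish the sharper estimate $|\Delta_u \Delta_h f(x)| \leq \|f\|_* (\|u\|+\|h\|)$ directly, by subtracting two instances of the Zygmund identity at the common center $x + (u+h)/2$, one with displacement $(u+h)/2$ and one with displacement $(u-h)/2$: the difference of the two left-hand sides is exactly $f(x+u+h) + f(x) - f(x+u) - f(x+h) = \Delta_u\Delta_h f(x)$, while the right-hand sides are each bounded by $\|f\|_*(\|u\|+\|h\|)/2$. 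Taking $u = w_j - x$, whose norm is $O(L)$ since $\operatorname{dist}(a,Q) \leq CL$, $\|w_j - a\| \leq \|v\| \leq CL$ and $\operatorname{diam}(Q) = \sqrt{d}\,L$, and taking $h = v_j e_j$, this gives $|B_j| \leq K\|f\|_* L$. Summing $|B_j| + |C_j|$ over $j = 1, \ldots, d$ then yields the lemma with a constant $K = K(C, d)$.
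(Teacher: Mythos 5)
Your proof is correct and follows essentially the same route as the paper's: both telescope along the coordinate directions, both change base points via the identical mixed-second-difference trick (two applications of the Zygmund inequality at a common center, which is exactly the paper's estimate \eqref{eqq24}), and both finish with a one-dimensional comparison of $f(x+te_j)-f(x)$ against its linear interpolant. The only difference is cosmetic: where the paper invokes the logarithmic growth of one-dimensional divided differences and multiplies by the increment, you prove the equivalent ``two zeros'' bound directly by dyadic bisection, which is a slightly more self-contained way of establishing the same fact.
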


\emph{Proof.} Without loss of generality we may assume that $Q$ is
the unit cube in $\Rd$ and $2\|b\|  + \|a \| \le 1$. Since the
function $f$ is in the Zygmnund class we have
$$
| f(a) + f(b-a) - 2f(b/2)  | \leq  \|f \|_*
$$
and
$$
| f(b) + f(0) - 2f(b/2)  | \leq  \|f \|_* \, .
$$
Hence
\begin{equation}\label{eqq24}
| f(b) - f(a) - (f(b-a) - f(0))  | \leq 2 \|f \|_* \, .
\end{equation}
So we can also assume that the point $a$ is the origin.

Let us first discuss the particular case when $b$ lies in a
coordinate axis, that is, $b = b_k e_k$ for a certain $\,k=1,2,
\ldots , d$, with $b_k \in \R$, $\,|b_k|\leq 1/2$. Since the
function $f$ lies in the Zygmund class, the divided differences of
$f$ in a fixed direction can grow at most as a fixed multiple of
the logarithm of the increment. Hence there exists a universal
constant $K_1$ such that
$$
\left | \frac{f(b) - f(0)}{b_k} - (f(e_k) - f(0)) \right | \le K_1 \|f\|_* \log
\frac{1}{\|b \|} \, .
$$
Since $|b_k| = \|b \|\le 1/2$ we deduce that
$$
| f(b) - f(0) - b_k (f(e_k) - f(0)) | \le K_1 \|f\|_* \, .
$$
We now discuss the general case. Write $b=\sum_{k=1}^{d} b_k e_k
$, where $|b_k | \le 1/2$, $\, k=1,2,\ldots,d$. Then
$$
f(b)- f(0) = f(b_1 e_1) - f(0)  +  \sum_{k=2}^{d} f(\sum_{j=1}^{k}
b_j e_j ) - f(\sum_{j=1}^{k-1} b_j e_j ) \, .
$$
Applying \eqref{eqq24}, for any $k=2,\ldots, d$ we get
$$
\left |  f\left ( \sum_{j=1}^k b_je_j\right ) - f \left ( \sum_{j=1}^{k-1} b_je_j\right ) -
(f(b_ke_k)-f(0))       \right | \leq 2\|f\|_*
$$
Now each term is in the situation of the particular case discussed
in the previous paragraph and we obtain
\begin{align*}
& | f(b_1 e_1) - f(0) - b_1 (f(e_1) - f(0)) | \le K_1 \|f\|_*  \, ,  \\*[5pt]
&  \left | f \left (\sum_{j=1}^{k} b_j e_j \right ) - f \left (\sum_{j=1}^{k-1} b_j e_j \right ) -
b_k (f( e_k) - f(0)) \right | \le (K_1+2) \|f\|_* \, ,
\end{align*}
for any $ k=2, \ldots, d$. Hence
$$
| f(b) - f(0)   - \sum_{k=1}^{d} b_k (f( e_k) - f(0)) | \le (K_1+2) \,d
\|f\|_*
$$
which finishes the proof. \hfill$\qed$

As a consequence of the Lemma, in the proof of Theorem~\ref{teo1} ,
instead of studying the divided differences of a function in the
Zygmund class, we can restrict attention to the behavior of the
discrete gradient.

\begin{cor}~\label{coro1}\

(a) Let $f$ be a function in the Zygmund class. Let $x \in \Rd$.
Then $x \in E(f)$ if and only if $ \limsup_{n \to \infty}
\|V(Q_n(x)) \| < \infty$, where $V(Q)$ denotes the discrete
gradient of $f$ at the cube $Q$.

(b) Let $f$ be a function in the small Zygmund class. Then $f$ is
differentiable at  a point $x \in \Rd$ if and only if
 $ V(Q_n(x)) $ has limit when $n \to \infty$.
\end{cor}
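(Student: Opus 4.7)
Both parts follow by applying Lemma~\ref{lema2} at a scale comparable to the displacement. For part (a), one direction is direct: if $x\in E(f)$, then $|f(y)-f(x)|\leq M\|y-x\|$ for $y$ close to $x$, and since both vertices of $Q_n(x)$ involved in each component of $V(Q_n(x))$ are within $\sqrt{d}\,\ell(Q_n(x))$ of $x$, we immediately get $\|V(Q_n(x))\|\leq 2\sqrt{d}\,M$ for $n$ large. For the converse, suppose $\limsup_n\|V(Q_n(x))\|\leq M$. Given $h\neq 0$ small, choose $n$ with $\ell(Q_n(x))\in[\|h\|,2\|h\|)$. Since $x\in Q_n(x)$ and $\|h\|\leq\ell(Q_n(x))$, Lemma~\ref{lema2} applied with $a=x$, $b=x+h$ and $Q=Q_n(x)$ gives
\[
|f(x+h)-f(x)-\langle V(Q_n(x)),h\rangle|\leq K\|f\|_*\,\ell(Q_n(x))\leq 2K\|f\|_*\,\|h\|,
\]
and the triangle inequality yields $|f(x+h)-f(x)|=O(\|h\|)$.

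Part (b) is handled in the same spirit. The forward direction is immediate: if $f$ is differentiable at $x$ with gradient $v$, the expansion $f(y)=f(x)+\langle v,y-x\rangle+o(\|y-x\|)$ applied at $y=x_n$ and $y=x_n+\ell(Q_n(x))e_j$ (where $x_n$ is the origin of $Q_n(x)$) forces $V(Q_n(x))\to v$. For the converse, one needs a refinement of Lemma~\ref{lema2} for $f\in\lambda_*(\Rd)$, namely
\[
|f(b)-f(a)-\langle V(Q),b-a\rangle|\leq K\,\omega_f(\ell(Q))\,\ell(Q),
\]
where $\omega_f(t)=\sup_{\|h\|\leq t}\sup_{x\in\Rd}|f(x+h)+f(x-h)-2f(x)|/\|h\|\to 0$ as $t\to 0$. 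One checks that in the proof of Lemma~\ref{lema2} the Zygmund seminorm $\|f\|_*$ is only invoked for second differences of step size comparable to or smaller than $\ell(Q)$, so it may be replaced throughout by $\omega_f(C\ell(Q))$. With this sharpening, the argument of (a) yields, for $v=\lim_n V(Q_n(x))$ and $n$ chosen as before,
\[
|f(x+h)-f(x)-\langle v,h\rangle|\leq\|V(Q_n(x))-v\|\,\|h\|+2K\,\omega_f(2\|h\|)\,\|h\|=o(\|h\|),
\]
so $f$ is differentiable at $x$ with gradient $v$.

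The one delicate ingredient is verifying the above refinement of Lemma~\ref{lema2}. The key point is that in the one-dimensional telescoping of slopes along a fixed coordinate direction, each second-difference step has size at most a fixed multiple of $\ell(Q)$; consequently the $O(\log(\ell(Q)/|b_k|))$ terms, each bounded by $\omega_f(\ell(Q))$, combine (after multiplication by $|b_k|$ and using $t\log(1/t)=O(1)$ for $t\in(0,1]$) to give a bounded multiple of $\omega_f(\ell(Q))\,\ell(Q)$. This is precisely what upgrades the merely bounded discrete gradient of part (a) to a genuine derivative in part (b).
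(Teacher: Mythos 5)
Your proof is correct and follows exactly the route the paper intends (the paper states the corollary as an immediate consequence of Lemma~\ref{lema2} without writing out any details): apply Lemma~\ref{lema2} with $a=x$, $b=x+h$ and $Q=Q_n(x)$ at the scale $\ell(Q_n(x))\approx\|h\|$. Your observation that part (b) requires the $\lambda_*$-refinement of Lemma~\ref{lema2}, with $\|f\|_*$ replaced by the modulus $\omega_f(C\ell(Q))$, together with your verification that every second difference in that lemma's proof has step $O(\ell(Q))$, is precisely the point the paper leaves implicit.
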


The proof of Theorem~\ref{teo1} consists on constructing a Cantor
type set on which the function $f$ has bounded divided differences.
The construction of the Cantor type set uses an stopping time
argument based on the following auxiliary result which contains the
main idea of the construction.  The key point is that, when the
divided differences are large, the discrete gradient distributes its
values in a certain uniform way when measured with respect to
length.

If  $f$ is in the Zygmund class $\Lambda_* (\R^d)$ and  $V(Q)$ is
its discrete gradient at the cube $Q$, consider the function
\begin{equation}\label{omega}
w(\delta)= \sup \{|V(Q')-V(Q)| \},
\end{equation}
where the supremum is taken over all pairs of cubes $Q' \subset Q
\subset \Rd$ with $2 \ell(Q') = \ell(Q)  \leq \delta$. It follows
from Lemma~\ref{lema2}  that there exists a constant $C$ only depending on the
dimension such that $w(\delta ) \le C \|f \|_*$, for any $\delta
>0$.

\begin{prop}\label{propo1}
Let $f$ be a function in the Zygmund class $\Lambda_* (\R^d)$
satisfying
\begin{equation}\label{derinfi}
H^1 (\{x\in\R^d\, :\, \limsup_{\|h\| \to
0}\frac{|f(x+h)-f(x)|}{\|h\|}<\infty\}) =0.
\end{equation}
Consider the corresponding function $w(\delta)$  defined in
\eqref{omega} above and assume that $w(\delta) \leq 1$ for any
$\delta >0$. Fix $0< \varepsilon <1$. Then there is a positive
constant $C_0 = C_0 (\varepsilon , d)$ such that for any $\delta >
0$ there is $M_0 = M_0 ( \delta ) > 0$  with the following property.
If $M > M_0 $ and $Q \subset \R^d$ is a dyadic cube with $l(Q) <
\delta $ and $V(Q) = Me$
 where
$e\in \R^d$, $\| e \|= 1 $  then for each unit vector $u\in \R^d$,
there is a finite family $\mathcal{Q^*}= \{Q_j^*  \}$ of dyadic
subcubes of $Q$ such that


\begin{enumerate}[(a)]
\item For any $j$, one has $\ell(Q_j^*)\leq 2^{- \varepsilon
M}\ell(Q)$ and $\varepsilon M \leq\|V(Q_j^*)- M e \|\leq
\varepsilon M + w(\ell(Q))$.

\item If $x\in Q_j^*$   and
$-\log_2\ell(Q)\leq n <-\log_2\ell(Q_j^*)$ for some $j$ and $n$,
then  $\| V(Q_n(x))-M e \|\leq \varepsilon M$.

\item   $ \sum_j\ell(Q_j^*)\geq C_0^{-1} \ell(Q)$, where
the sum is taken over all cubes $Q_j^* \in \mathcal{Q^*}$.
Moreover for any cube $R$ contained in $Q$, one has $
\sum_j\ell(Q_j^*) \leq C_0 \ell(R) $, where the sum is taken over
all cubes $Q_j^*  \in \mathcal{Q^*}$ contained in $R$.

\item For any $j$, one has $\langle V(Q_j^*)-M e, u \rangle\geq
\frac{2}{3}\varepsilon^2 M$.
\end{enumerate}
\end{prop}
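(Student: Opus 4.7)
The plan is to define $\mathcal{Q}^*$ as a $u$-dependent subfamily of the dyadic cubes produced by the natural first-exit stopping time
\begin{equation*}
\tau(x)=\min\{n\geq -\log_2\ell(Q):\|V(Q_n(x))-Me\|\geq \varepsilon M\}.
\end{equation*}
Hypothesis~\eqref{derinfi} and Corollary~\ref{coro1} imply that $\limsup_n\|V(Q_n(x))\|=\infty$ off an $H^1$-null subset of $Q$, so $\tau(x)<\infty$ at $H^1$-almost every $x\in Q$. Let $\{R_k\}$ denote the resulting disjoint family of stopping cubes. Properties (a)--(d) are verified in that order, with (d) deferred to the end since it is the only one depending on $u$.

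Properties (a) and (b) are essentially automatic. Since $\|V(Q_{n+1})-V(Q_n)\|\leq w(\ell(Q_n))\leq w(\ell(Q))\leq 1$, telescoping from the scale of $Q$ down to the scale of $R_k$ forces $\ell(R_k)\leq 2^{-\varepsilon M}\ell(Q)$, while the overshoot at stopping is bounded by $w(\ell(Q))$; together these give (a). Property (b) is literally the definition of $\tau$.

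For the upper bound in (c), the plan is to restrict attention to a single line segment $L\subset Q$ parallel to a coordinate axis and keep only cubes meeting $L$. Each nonempty $L\cap R_k$ is a subinterval of length $\ell(R_k)$, so for every sub-cube $R\subset Q$,
\begin{equation*}
\sum_{R_k\subset R,\,R_k\cap L\ne\emptyset}\ell(R_k)\;\leq\;\ell(R)
\qquad\text{and}\qquad
\sum_{R_k\cap L\ne\emptyset}\ell(R_k)\;=\;\ell(Q).
\end{equation*}
Property (c) is then reduced to showing that the further subselection demanded by (d) keeps a definite 1-dimensional fraction of these on-line cubes.

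The principal difficulty is (d). The key input is the approximate-martingale property of $V$, a direct consequence of Lemma~\ref{lema2}: the volume-weighted mean of $V(Q')$ over the $2^d$ dyadic children of any $Q_n$ agrees with $V(Q_n)$ up to an error of size $O(w(\ell(Q_n)))$. Iterating along the stopped descent shows that the exit distribution of $V(R_k)-Me$ on the sphere of radius $\approx\varepsilon M$ has volume-weighted mean of size $O(w(\ell(Q)))$, negligible once $M>M_0(\delta)$. Combining this balance with the approximate 1D Zygmund martingale structure of the restriction of $V$ to $L$ (again via Lemma~\ref{lema2}), a first-passage argument transfers the $d$-dimensional symmetry to a 1-dimensional length balance along $L$: orienting $L$ so that its direction has a definite $u$-component and retaining only on-line cubes whose exit vector lies in the cone $\{v:\langle v,u\rangle\geq \tfrac{2}{3}\varepsilon^2 M\}$ (which is almost half the sphere of admissible exits, missing only an equatorial slab of angular width $\sim\varepsilon$) yields the family $\mathcal{Q}^*$. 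The threshold $M>M_0(\delta)$ is precisely what forces $w(\ell(Q))$ and the other error terms to stay much smaller than $\varepsilon^2 M$, which is needed for (a) and (d) to hold simultaneously. The main obstacle is exactly this last step: transferring the $d$-dimensional volume-weighted mean-value identity to a quantitative 1-dimensional length-weighted lower bound that is uniform in the direction $u$.
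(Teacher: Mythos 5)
Your treatment of (a) and (b) via the first-exit stopping time is exactly the paper's step 2, and your line-restriction idea gives the upper bound in (c) in essentially the same way the paper gets it (the paper bounds $\sum\ell(Q_j^*)$ over cubes in $R$ by the advance of a path in the $e$-direction). But the heart of the proposition is the lower bound in (c) \emph{jointly} with the cone condition (d), and there your argument has a genuine gap that you yourself flag as ``the main obstacle.'' The mechanism you propose --- the volume-weighted approximate martingale property of $V$ (mean over the $2^d$ children equals the parent up to $O(w)$), iterated to conclude that the exit distribution on the sphere of radius $\varepsilon M$ is balanced, then ``transferred'' to a length balance along a fixed coordinate line $L$ --- does not work as stated. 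The volume-weighted balance of the exit vectors gives no control whatsoever on the length-weighted distribution of exit vectors of the cubes meeting a single line: this is precisely the failure mode that makes the higher-dimensional problem hard (as the introduction notes, the fixed-direction mean value property is with respect to Lebesgue measure, and the whole difficulty is that one cannot localize it to a one-dimensional set). No quantitative first-passage argument is supplied, and none is available in this generality: the stopping cubes meeting $L$ could a priori all have exit vectors in the wrong half-space.

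The paper's resolution is structurally different and is the one new idea you are missing: instead of a straight line, it builds a polygonal $\Gamma$ whose segments point in the direction of the local discrete gradient $V(Q_j)/\|V(Q_j)\|$ (and in the direction $e$ on the small exceptional cover of $Q\setminus\bigcup Q_j$). Because each step is taken along $V(Q_j)$, Lemma~\ref{lema2} converts the increment of $f$ on that step into $\|V(Q_j)\|\,\|a_{j+1}-a_j\|$, and writing everything in terms of the advance $\pi(a_{j+1})-\pi(a_j)$ in the $e$-direction yields the conservation identity \eqref{bofa}: the weights are $\|V(Q_j)\|^2/\langle V(Q_j),e\rangle$, and the algebraic identity \eqref{eq351} shows this weight is $\geq M(1+C(\varepsilon))$ on the \emph{bad} cubes (those violating (d) with $u=-e$) and $\geq M(1-\varepsilon)$ on the good ones. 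Since the total weighted advance equals $M$ times the total advance up to $O(\ell(Q))$, the bad cubes cannot carry all the length, which forces a definite proportion of good cubes and gives (c) and (d) simultaneously. Note also the paper handles general $u$ not by a uniform-in-$u$ argument but by the reduction $g(x)=f(x)-M\langle u+e,x\rangle$, which moves $V(Q)$ to $-Mu$ and reduces everything to the single case $u=-e$; you would need some such device as well, since your cone selection along a fixed coordinate line has no reason to be uniform over $u$.
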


Let us first explain the main idea of the proof. Fixed the dyadic
cube $Q$ with $V(Q)=M e$, consider the family $\{Q_j \}$ of
maximal dyadic subcubes $Q_j$ of $Q$ satisfying
$$\| V(Q)- V(Q_j) \|\geq \varepsilon M$$
It easily follows that the whole family $\{Q_j \}$ satisfies
conditions (a) and (b). In particular $ V(Q_j)$ lays, up to a
bounded term, in the sphere centered at $V(Q)$ of radius
$\varepsilon M$. Condition (d) tells that we are only interested in
the subfamily $\mathcal{Q^*}= \{Q_j^* \}$ of $\{Q_j \}$ of cubes
$Q_j^*$ such that the corresponding discrete gradient $V(Q_j^*)$
lies in a certain cone with vertex at $V(Q)$. See
figure~\ref{figura1}. Condition (c) tells that when measured with
respect to length, there is a fixed proportion of cubes $\{Q_j^* \}$
satisfying (d). This is the main point in the result. We will
construct a polygonal path joining two parallel faces of a duplicate
of $Q$ on which the discrete gradient satisfies, up to bounded
terms, a certain mean value property. The family $\mathcal{Q^*}$
will be chosen as a subfamily of the cubes $\{Q_j \}$ which have a
non empty intersection with this polygonal and the mean value
property will lead to estimate (c).

 \begin{figure}
\begin{center}
\epsfig{figure=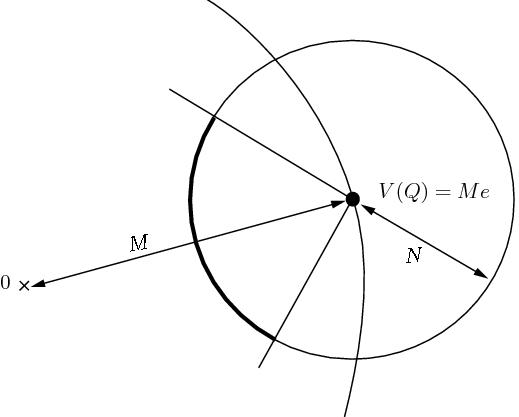}
 \caption{The bold region corresponds to the possible locations of
 $V(Q_j^*)$ when $u=-e$.}\label{figura1}
 \end{center}
\end{figure}

\emph{Proof of Proposition~\ref{propo1}.} Without loss of generality
we may assume that $ \|f \|_* =~1$.  Let $N = \varepsilon M$. The
proof is presented in several steps.

\bigskip

1. \textsl{Reduction to the case $u=-e$.}

Assume we have proved the Proposition in the case $u=-e$ and let
us show it for another unit vector $u$. Consider the function
$g(x)=f(x)-M\langle u+ e,x\rangle$. It is clear
that $g\in\Lambda_* (\R^d)$ and  $\|g\|_*=\|f\|_*$. Moreover, the
discrete gradient $V_g$ of the function $g$ verifies that for any dyadic
cube $R$,
$$V_g(R)=V(R)-M(u+ e).$$
Then $V_g(Q)=-M u$ and, according to our assumption, we can choose
the family $\{Q_j^* \}$ of cubes given by the Proposition, for
which $N\leq\|V_g(Q_j^*)-V_g(Q)\|\leq \break N+ w(\ell(Q))$ and
$\langle V_g(Q_j^*) \!-\! V_g(Q),-\frac{V_g(Q)}{\|V_g(Q) \|}
\rangle \!\geq\! \frac{2}{3}\varepsilon N$. Hence
$N\leq\|V(Q_j^*)- V(Q)\| \break \leq N+ w(\ell(Q))$ and $\langle
V(Q_j^*)-M e, u \rangle\geq \frac{2}{3}\varepsilon N$, for any
$j$.

\bigskip

2. \textsl{Covering the cube $Q$.}

Let $\{ Q_j \}$ be the family of maximal dyadic cubes contained in
$Q$ so that
$$\|V(Q)- V(Q_j)\|\geq N.$$
The family of cubes $\mathcal{Q^*} $ in the statement will be a
subfamily of $\{ Q_j \}$.  The maximality of the cubes $\{ Q_j \}$
imply that for any $j$ we have
\begin{itemize}
\item $ N\leq \|V(Q_j)-M e\|\leq N+ w(\ell(Q))$

\item $\| V(Q_n(x))-M e \|< N$ if $x \in Q_j$ for some $j$ and the
integer $n$ satisfies $-\log_2\ell(Q)\leq n <-\log_2\ell(Q_j)$.
\end{itemize}
Moreover the condition $w(\delta ) \leq 1$ gives that
$\ell(Q_j)\leq 2^{-N}\ell (Q)$ for any $j$. So the whole family
$\{ Q_j \}$ satisfy the first two properties in the statement and
the rest of the proof consists on finding a subfamily verifying
the last two properties. Since condition \eqref{derinfi} holds,
Corollary~\ref{coro1}  tells that
$$H^1  (\{x\in\R^d\, :\, \limsup_{n \to \infty} \|V(Q_n(x))\|<\infty\})=0$$
and hence $H^1(Q\setminus\cup Q_j)=0$. Fix a small constant $\eta
>0$  with  $\eta \le  1/4M \sqrt d$ and find a covering of
$Q\setminus\cup Q_j$ by dyadic cubes $\{ R_j\}$ so that
\begin{equation}\label{eta}
\sum_j\ell(R_j)<\eta \ell(Q).
\end{equation}
We can assume that no $R_\ell$ is contained in $\cup Q_j$.
If $\frac12 Q$ denotes the closed cube, centered at the center of
$Q$, with sidelength $ \ell(Q) /2$, it is clear that $\frac12 Q
\subset  \cup \tilde{Q_j} \cup \tilde{R_j }$, where
$\{\tilde{Q_j}\}$ and $\{\tilde{R_j}\}$  are respectively, the open
cubes centered at the centers of $Q_j$ and $R_j$ with double
sidelength. Compactness of $\frac12 Q$ allows us to obtain a finite
covering that we will denote by $C_1, C_2,\ldots, C_n$, that is,
$\frac12 Q \subset  C_1 \cup \ldots C_n \, .$
 Now, we label each $C_j$ with a unit vector
$v_j$ in the following way:
\begin{itemize}
\item If $C_j = \tilde{Q_k}$ for some $k$, pick
$v_j=\frac{V(Q_k)}{\|V(Q_k)\|}$, \item if $C_j = \tilde{R_k}$ for
some $k$, pick $v_j=e$.
\end{itemize}
Using the cubes $\{C_i : i=1, \ldots , n \}$ and their labels, we
will construct a polygonal path $\Gamma$, starting from a point in
$\frac12 Q$  in such a way that $\text{diam}\, (\Gamma)$ is
comparable to $\ell(Q)$. The polygonal will follow the directions of
the vectors $\{v_j\}$ and will have vertexes at certain points $a_1,
a_2, \ldots , a_n$ so that all of them, except the last one, lie in
$\frac12 Q$

\bigskip

3. \textsl{Choosing the first vertexes  $a_j$.}




Let $a_1$ be a fixed point in the cube $\frac14 Q$.  Next vertexes
are defined recursively. Assuming that $a_j \in \frac{1}{2} Q$ has
been defined, let us describe the choice of $a_{j+1}$. Since
$a_j\in \frac12 Q$, there is a cube $C_k$, $k=k(j)$, containing
this point, and recall that attached to this cube there is a unit
vector $v_k$. Let us define
$$a_{j+1}=a_j+\text{diam}\, (C_k) v_k.$$
Since $C_k$ is an open cube, it is clear that $a_{j+1}\not\in
C_k$. Since $\varepsilon <1$, there exists a constant $\alpha_0 <
\pi /2$ such that the angle $\alpha_k$ between $v_k$ and $e$
satisfies $|\alpha_k | < \alpha_0$ for any $k$. Let $\pi(x)$ be
the scalar product defined by $\pi (x) = \langle x, e\rangle$ ,
$x\in\R^d$. Observe that $\pi ( a_{j+1} - a_j )  >  (\cos \alpha_0
) \text{diam}\, C_k$. Since there are finitely many cubes $C_k$,
iterating this process, in a finite number of steps we find a
vertex not contained in $\frac12 Q$. We stop the process whenever
we obtain a vertex $a_n$ verifying $a_n \notin \frac12 Q$. Observe
that there exists a constant $C_0$ such that $\ell(Q) / 4 < \|a_n
- a_1\| < C_0 \ell(Q)$.

Let $a_1, a_2,\ldots, a_n$ be the points obtained by the previous
algorithm and let us denote by $J'$ the subset of $\{1,2,\ldots
,n\}$ formed by the indexes $j$ for which the selected cube $C_k$
was a cube in the family $\{\tilde{R_m} \}$. Put $J=\{1,2,\ldots
,n\}\setminus J^{'}$. Write
\begin{align*}
f(a_n)-f(a_1)&=\sum_{j=1}^{n-1} (f(a_{j+1})-f(a_j))\\
&=\sum_{j\in J'}(f(a_{j+1})-f(a_j))+ \sum_{j\in
J}(f(a_{j+1})-f(a_j))
\end{align*}
Since $\|a_n - a_1 \| < C_0 \ell(Q)$ we can apply
Lemma~\ref{lema2} to obtain
\begin{equation}\label{ident32}
|f(a_n)-f(a_1) - \langle V(Q), a_n-a_1\rangle  | \le C\ell(Q),
\end{equation}
where $C$ is an absolute constant only depending on the dimension.
Since $ V(Q)=M e$, we have $\langle V(Q), a_n - a_1 \rangle = M (\pi
(a_n) - \pi (a_1) )$ and \eqref{ident32} tells
\begin{equation}\label{ident}
\bigg |\sum_{j\in J'}(f(a_{j+1})-f(a_j))+ \sum_{j\in J}(f(a_{j+1})-f(a_j))
- M (\pi (a_n)-\pi(a_1)) \bigg  | \le C\ell(Q)
\end{equation}

 The family $\mathcal{Q^*}$ will be a subfamily
of $\{Q_j : j \in J\}$ while the first sum in \eqref{ident} will
be an error term.

\bigskip

4. \textsl{Estimating the error term.}

If $j\in J'$  the cube $C_k$, $k=k(j)$, containing $a_{j}$ belongs
to the family $\{ \tilde{R}_m \}$. So, write $C_k = \tilde{R}_{\ell}$.
Since $R_{\ell}$ is not contained in $\cup Q_j$, we have that $\|
V(R_\ell)-M e\|<N$. Hence by Lemma~\ref{lema2}, there exists a fixed
constant C, such that  $|f(a_{j+1}) - f(a_j)| \le (M+N+C)
\|a_{j+1} - a_j \| \le 2M \|a_{j+1} - a_j \|$ if $M$ is
sufficiently large. Since by \eqref{eta} the amount of length
collected by these squares is small, we make the trivial estimate
on them:
\begin{align*}
 \sum_{j \in J^{'}} |f(a_{j+1})-f(a_j)|  &\leq  4 \sqrt d M \sum_{j\in J^{'}} \ell(R_j)\leq
 4 \eta  \sqrt{d} M \ell(Q).
\end{align*}
Since $\eta >0$ was chosen so that $ \eta < 1/4M \sqrt d$, estimate
\eqref{ident} gives that
\begin{equation}\label{ident2}
| \sum_{j\in J}(f(a_{j+1})-f(a_j)) - M (\pi (a_n)-\pi(a_1)) | \le
\tilde{C} \ell(Q),
\end{equation}
where $\tilde{C}$ is a constant only depending on the dimension.

\bigskip

5. \textsl{Choosing the cubes of the family $\mathcal{Q^*}$.}

Assume $j\in J$. This means that  $ a_j \in C_k  \in \{ \tilde{Q}_k
\}$. To simplify the notation reorder the family $\{ \tilde{Q}_k \}$
so that $a_j \in \tilde{Q}_j$ for $j \in J$. Consequently we have
$$a_{j+1}=a_j+  \text{diam}\, (\tilde{Q}_j)  v_j \, .$$
For $j\!=\!1,2, \ldots , n$, consider the segment between $a_j$ and
$a_{j+1}$,  and let $\Gamma$ be the polygonal with vertexes $\{a_1,
\ldots , a_n \}$. Since $\|a_{j+1}-a_j\|$ is comparable to
$\ell(Q_j)$, we can apply Lemma~\ref{lema2} again to get
\begin{equation}\label{eq359}
| f(a_{j+1})-f(a_j)  - \langle V(Q_j),a_{j+1}-a_j\rangle | \le C
\ell (Q_j) \, .
\end{equation}
 Since the angle $\alpha_j$ between the vectors
$v_j$ and $e$ satisfies $| \alpha_j | < \alpha_0 < \pi /2$, we
have that $\|a_{j+1} -a_j \| \leq C_1 \pi (a_{j+1}-a_j)$ for any
$j$, where $C_1 = 1 / \cos \alpha_0$. Hence
\begin{equation}\label{eq350}
\sum \ell (Q_j) \leq C_1 \|a_n-a_1\| \leq C_2 \ell (Q).
\end{equation}
The choice of $a_{j+1}$ gives that for any $j\in J$, we have
$$\langle V(Q_j),a_{j+1}-a_j\rangle=\|V(Q_j)\|\, \|a_{j+1}-a_j\|=\frac{\|V(Q_j)\|^2}{\langle V(Q_j),e\rangle}\, (\pi(a_{j+1})-\pi(a_j)).$$
So estimates \eqref{ident2}, \eqref{eq359}  and \eqref{eq350} give
the following one dimensional mean value property
\begin{equation}\label{bofa} \sum_{j\in J} \frac
{\|V(Q_j)\|^2}{\langle V(Q_j), e\rangle} (\pi(a_{j+1})-\pi(a_j)) =
M (\pi (a_n)-\pi(a_1)) + 0(\ell(Q)) \, .
\end{equation}
Here $0(\ell(Q))$ denotes a quantity which is a bounded
by a fixed proportion, independent of $Q$, of $ \ell(Q)$.
%
Now, a simple calculation gives
\begin{align} \label{eq351}
\frac{\| V(Q_j)\|^2}{\langle  V(Q_j), e \rangle}\,
&= \frac{\| V(Q_j)-M e\|^2+2M\langle V(Q_j)-M e,e\rangle +M^2}{\langle V(Q_j)-M e, e\rangle +M}\\
&= M\left(2-\frac{1-\frac{1}{M^2}\|V(Q_j)-M
e\|^2}{1+\frac{1}{M}\langle V(Q_j)-M e, e\rangle}\right). \nonumber
\end{align}
Decompose the set $J$ into \textsl{good} and \textsl{bad} indexes,
that is, $J=B \cup G$ where
\begin{align*}
G&=\Big  \{ j\in J \,: \, \langle V(Q_j)- M e,e\rangle\leq
-\frac{2}{3}\varepsilon
N  \Big \}, \\
B &= J \setminus G
\end{align*}
The family of cubes $\mathcal{Q^*}$ we are looking for is precisely
 $\{Q_j : j \in G \}$. It is clear that property (d), with $u=-e$, holds.
 Let $R$ be a cube in $\Rd$ and assume that
 $Q_j^* \in \mathcal{Q^*}$ is contained in $R$. Then there exists
 a vertex $a_j$ of the polygonal $\Gamma$ with $ a_j \in 2 Q_j^*
 \subset 2R$. Since $\ell (Q_j^*) \leq \|a_{j+1} - a_j \| \le (\cos^{-1} \alpha_0 ) \pi (a_{j+1} - a_j )$,
  we deduce that there exists a constant $C_3 = C_3 (\alpha_0 , d)$ such that $\sum \ell (Q_j^*) < C_3 \ell (R)$ where the sum is taken over all  cubes $Q_j^*$ contained in $R$. This is the second part of
 statement (c).

\medskip

6. \textsl{Estimates on G and B}.

The stopping time process described in Paragraph 2 gives that
$\|V(Q_j)- M  e\|^2\geq N^2$. Hence $\langle V(Q_j)- M e,
e\rangle\geq -N$ and if $j\in B$ we also have $\langle V(Q_j)- M e,
e \rangle \break \geq - 2 \varepsilon N /3$. Consequently, using
\eqref{eq351} we obtain the following estimates
\begin{itemize}
\item If $j\in G$, one has
$\displaystyle\frac{\|V(Q_j)\|^2}{\langle V(Q_j), e\rangle}\geq M
(1-\varepsilon)$,
\smallskip

\item If $j\in B$, one has
$\displaystyle\frac{\| V(Q_j)\|^2}{\langle V(Q_j), e\rangle}\geq
M\left( 2-\frac{1-\varepsilon^2}{1- 2 \varepsilon ^2 /3 } \right)
= M(1+ C(\varepsilon))$,
\end{itemize}
where $C(\varepsilon ) =  \varepsilon^2 / (3 - 2 \varepsilon^2)
> 0$.
Hence equation (\ref{bofa}) gives
\begin{align}
\pi(a_n)-\pi(a_1)+ \frac{O(\ell (Q))}{M}  &\geq (1-\varepsilon)
\sum_{j\in G}( \pi(a_{j+1})-\pi(a_j) )  \\
&\quad + (1+ C(\varepsilon))\sum_{j\in B} ( \pi(a_{j+1})-\pi(a_j) )
\nonumber
\end{align}
Since $\sum \ell (R_j) < \eta \ell (Q) <\ell (Q)/M$ we have that
$$\sum_{j\in J}
\pi(a_{j+1})-\pi(a_j)=\pi(a_{n})-\pi(a_1) + \frac{O( \ell (Q))}{M}$$
and we deduce that
\begin{align}
 \pi(a_n)-\pi(a_1)+\frac{O(\ell (Q))}{M} &\geq (\pi(a_n)-\pi(a_1)) \left( 1 + C(\varepsilon )\right)
\\
&\quad -  (\varepsilon + C(\varepsilon)) \sum_{j\in G}
 ( \pi(a_{j+1})-\pi(a_j) ). \nonumber
\end{align}
Therefore there exists a constant $C_1 (\varepsilon) >0$ such that
$$\sum_{j\in G} (\pi(a_{j+1})-\pi(a_j))\geq C_1 (\varepsilon) (\pi(a_n)-\pi(a_1)).$$
Since $\pi(a_{j+1})-\pi(a_j)$ and $\pi(a_n)-\pi(a_1)$ are
comparable, respectively, to $\ell(Q_j)$ and $\ell(Q)$, we obtain
(c). \hfill$\qed$

\begin{rem}\label{remar1}
Taking $u=-e$ in part (d), we obtain that for any $j=1,2, \ldots$,
one has
\begin{align*}
\|V(Q_j^*) \|^2 &= \| V(Q_j^*) - Me    \|^2 + M^2 + 2M \langle  V(Q_j^*)-M e,  e \rangle\\
&\leq  (\varepsilon M + w(\ell (Q)))^2 + M^2 - 4 \varepsilon^2 M^2 /3 = \\
&= M^2 (1- \varepsilon^2 /3 ) + 2 w(\ell (Q)) \varepsilon M +
w(\ell (Q))^2 .
\end{align*}
Taking the parameters so that  $M \varepsilon >20$ and assuming
$w(\ell (Q)) \leq 1$, we easily deduce that $\|V(Q_j^*) \| <M C_2
(\varepsilon)$, where $ C_2(\varepsilon) = (1-\varepsilon^2
/6)^{1/2} < 1$. So starting from a cube $Q$ so that the discrete
gradient satisfies $\|V(Q)\|=M$, Proposition~\ref{propo1} provides a
family $\{Q_j^* \}$ of dyadic subcubes  of $Q$ satisfying
$\|V(Q_j^*)\| < M C_2(\varepsilon) < M$, as well as properties
(a)-(c). This fact will be crucial in the proof of
Theorem~\ref{teo1}.
\end{rem}

\begin{rem}\label{remar2}
If $0< \varepsilon < 1$ is taken close to $1$, our arguments
 show that one can replace in part (d) the constant $2\varepsilon
 /3$ by a constant close to $1$. This means that $V(Q_j^*)$ lies
 in a cone with vertex at the point $M e$ and small aperture.
 This fact will be used in the proof of certain refinements of Theorem~\ref{teo1}.
\end{rem}

We can now proceed to prove Theorem~\ref{teo1}.

\emph{Proof of Theorem~\ref{teo1}.} Let us first discuss part (a).
Let $f$ be a function in the Zygmund class and let $V(Q)$ denote its
discrete gradient at a cube $Q$. We can assume that the function
$w(\delta)$ defined in Proposition~\ref{propo1} satisfies $w(\delta)
\leq 1$ for any $\delta >0$. Corollary~\ref{coro1} tells that we can
restrict our attention to the behavior of the discrete gradient on
dyadic cubes. We can assume that condition \eqref{derinfi} is
satisfied and that $V(Q^1)=0 \in \Rd$ where $Q^1$ is the unit cube
in $\Rd$. We will construct a Cantor type set on which the function
$f$ has bounded divided differences. The generations $\A(n)$ of the
Cantor type set will be defined recursively. Fix a number $0 <
\varepsilon < 1$, say $\varepsilon = 1/2$. Fix a large number $M>M_0
(1)$, where $M_0 (\delta)$ is the constant appearing in
Proposition~\ref{propo1}. The first generation $\A(1)$ consists of
the unit cube $Q^1$. The second generation is constructed as
follows. Consider the maximal dyadic cubes $\{R_j\}$ contained in
$Q^1$ so that $\|V(R_j)\| \geq M$. The maximality and the estimate
$w(\delta) \leq 1$ give that $\|V(R_j)\| \leq M+ 1$ for any
$j=1,2,\ldots $. Let $e_1$ be the first vector of the canonical
basis of $\Rd$ and let $L$ be the segment given by $L=\{t e_1 : 0
\leq t \le 1 \}$ and consider the subfamily $\mathcal{R}$ of $\{R_j
\}$ formed by those $R_j$ which have a non empty intersection with
$L$. By \eqref{derinfi} every point of $L$, except at most for a set
of zero length, is contained in a cube of the family $\mathcal{R}$.
Let $\Pi$ be the projection over the first coordinate, that is, $\Pi
(x) = \langle x , e_1 \rangle e_1$, $x \in \Rd$. Since $\{ \Pi (R_j)
: R_j \in \mathcal{R} \}$ are pairwise disjoint, for any cube $Q
\subset \Rd$ we have
$$
\sum \ell (R_j) \leq \ell (Q) \, ,
$$
where the sum is taken over all cubes $R_j \in  \mathcal{R}$
contained in $Q$. Assume that $N=\varepsilon M = M/2 >20$. Apply
Proposition~\ref{propo1} in each cube $R \in \mathcal{R}$ to obtain
a family of dyadic cubes $\mathcal{Q}(R) =\{Q_j \}$ contained in $R$
satisfying conditions (a)- (d). In particular by
Remark~\ref{remar1}, we have  $\|V (Q_j)\| < M$ for any $Q_j \in
\mathcal{Q}(R) $. The second generation $\A(2)$ of the Cantor type
set is defined as the union of the families of cubes $\{
\mathcal{Q}(R) : R \in \mathcal{R} \}$. Observe that if $x \in Q$
for some $Q \in \A(2)$ and $n \leq -\log_2 \ell(Q)$, property (b) of
Proposition~\ref{propo1} gives that $\|V(Q_n (x)) \| \leq M+1+N \leq
2M+1$. Property (a) of Proposition~\ref{propo1} tells that $\ell(Q)
< 2^{-N} $ for any $Q \in \A(2)$,  while Property (c) gives that
$$
\sum \ell (Q) \geq C_0^{-1} \, ,
$$
where the sum is taken over all cubes $Q \in \A(2)$. Moreover
 if $R$ is a cube in $\Rd$
we have $\sum \ell (Q) \le C_0 \ell (R)$, where the sum is taken
over all cubes $Q \in \A(2)$ contained in $R$. The construction
continues inductively. Assume we have defined the generation
$\A(n)$ of cubes satisfying $\|V(Q) \| \le M$ for any $Q \in
\A(n)$. In each $Q \in \A(n)$ we act as in the first step, that
is, we consider the family $\{R_j  \}$ of maximal dyadic cubes
$R_j  = R_j (Q)$ contained in $Q$ such that $\|V(R_j)\| \geq M$.
\linebreak  Let $a=a(Q)$ be the origin of $Q$. Consider the
segment $L=L(Q)=\{ a + t e_1 \colon \break  0 \leq t \le \ell (Q)
\}$ and the subfamily $\mathcal{R} (Q)$ of $\{R_j \}$ formed by
those $R_j$ which have a non empty intersection with $L$. Finally
in each $R \in \mathcal{R} (Q)$ apply Proposition~\ref{propo1} to
obtain a family of dyadic cubes $\mathcal{Q}(R) =\{Q_j \}$
contained in $R$ satisfying conditions (a)-(d). The generation
$\A(n+1)$ is given by the cubes $\{Q_j \colon  Q_j \in \mathcal{R}
(Q), \break Q \in \A(n) \}$. Properties (a) and (c) of
Proposition~\ref{propo1} give conditions (a)-(c) in
Lemma~\ref{lema1} with the parameters $s=1$, $\eta_0 = 2^{-N}$,
$K_0 = C_0^{-1} $ and $\widetilde K = C_0 $. Hence the Hausdorff
dimension of the set
$$
E=E(M) = \bigcap_{n=1}^\infty \bigcup_{Q\in \A(n)} Q
$$
is bigger than $1 - \log_2 (C_0) /N$. Since for any point $x \in E$
we have  $\|V(Q_n (x)) \| \leq 2M + 1$, for any $n=1,2, \ldots $, we
deduce that at any point in $E=E(M)$ the function $f$ has bounded
divided differences. Since $N=M/2$ can be taken large the proof of
(a) is completed.

We now discuss part (b) of Theorem~\ref{teo1}. Since the proof is
similar to the previous one we will only sketch the argument. Let
$f$ be a function in the small Zygmund class. Corollary~\ref{coro1}
tells that it is enough to show that the Hausdorff dimension of the
set of points where the discrete gradient converges is bigger or
equal to 1. We will construct a Cantor type set on which the
discrete gradient converges. The generations $\A(n)$ of the Cantor
type set will be defined recursively. The main idea is to observe
that in Proposition~\ref{propo1} if the function $f$ is in the small
Zygmund class, at small scales one can use small parameters $M$.
More precisely, the proof of Proposition~\ref{propo1} gives that one
can take $M_0 = M_0 (\delta)$ with $M_0 (\delta ) \to 0$ as $\delta
\to 0$. So, choose an increasing sequence of integers  $N(n) \to
\infty$ as $n \to \infty$ such that
\begin{equation}\label{38}
\sum_n M_0 (2^{-N(n)}) < \infty \, .
\end{equation}
The first generation $\A(1)$ consists of the unit cube $Q^1$.
Assume by induction that we have defined the generation $\A(n)$
satisfying $\ell (Q) \leq 2^{-N(n)}$ for any $Q \in \A(n)$. In
each $Q \in \A(n)$ we use a slight variation  of the construction
of the generations in part (a) with the purpose that the cubes of
generation $n+1$ contained in $Q$ have sidelength smaller than
$2^{- N(n+1)}$ and still have discrete gradient close to $V(Q)$.
Fix $Q \in \A(n)$ and argue as in part (a) with the ball centered
at the origin of radius $M$ replaced by the ball centered at
$V(Q)$ of radius $2 M_0 (2^{-N(n)})$, to obtain an intermediate
family of cubes $\{R_j \}$ so that $\|V(R_j) - V(Q) \| \leq 2 M_0
(2^{-N(n)})$ for any $j=1,2, \ldots$. In each intermediate cube
$R_j$ we can repeat this process finitely many times to obtain a
family of cubes $\mathcal{R} (Q)$ such that $\ell(R) \leq 2^{-
N(n+1)}$ and $\|V(R) - V(Q) \| \leq 2 M_0 (2^{-N(n)})$ for any $R
\in \mathcal{R} (Q)$. The family $\A(n+1)$ will be formed by the
dyadic cubes in $\cup \mathcal{R} (Q)$, where the union is taken
over all cubes $Q \in \A(n)$. Consider the set $E= \bigcap_n
\bigcup_{Q\in \A(n)} Q$. As before Lemma~\ref{lema1} tells that
the Hausdorff dimension of $E$ is bigger or equal to one. Also
condition \eqref{38} tells that for any $x \in E$ the sequence
$V(Q_n (x))$ converges as $n \to \infty$.
 \hfill$\qed$

\section{Theorem~\ref{teo2}}

The purpose of this section is to construct a function $f$ in the
small Zygmund class $\lambda_*(\Rd)$ for which the set $E(f)$ has
Hausdorff dimension 1. Let us first explain the main idea of the
construction. Fix a sequence of positive numbers $0\leq
\varepsilon_k \leq 1$, with $\lim \varepsilon_k=0$ such that $\sum
\varepsilon_k^2=\infty$. The function $f$ will be constructed as
$f=\sum g_k$, where $\{g_k\}$ will be a sequence of smooth functions
defined recursively with $\sum ||g_k ||_\infty < \infty$. Certain
size estimates of $g_k$ and its gradient will give that $f$ is in
the Small Zygmund Class. Assume the functions $\{g_j : j =0, \ldots,
k \}$ have been defined. The idea is to construct the function
$g_{k+1}$ in such a way that $\nabla g_{k+1} (x)$ is \emph{almost
orthogonal} to $\nabla \sum_{j=1}^k g_j (x)$ and $|\nabla g_{k+1}
(x)|$ is comparable to $ \varepsilon_{k+1}$ for \emph{most} points
$x\in \Rd$. Since one can not hope to achieve both requirements at
all points $x\in\Rd$, an exceptional set appears and \emph{one
dimensional} estimates of the size of this set will be needed. The
construction is easier in dimension $d=2$ because the boundary of a
square has dimension 1 while when $d>2$ some extra requirements will
be needed to obtain the convenient one dimensional estimates.

Pick $g_0\equiv 0$. Fix an integer $k>0$ and  assume by induction
that $f_k=\sum_{j=0}^k g_j$ has been defined. Assume also by
induction that $\nabla f_k$ is uniformly continuous in $\Rd$ and
that
\begin{equation*}
M_k =\sup_{\Rd} \|\nabla f_k \| <\infty.
\end{equation*}
The construction of the function $g_{k+1}$ is presented in several
steps. The reader only interested in dimension $d=2$ could skip
paragraphs 3,4 and 5.

\noindent $\textbf{1.}$ Choose a positive number $\eta_k
>0$ with $ \eta_k <2^{-k}/M_k $ and $\eta_k \le \varepsilon_k^3$
and find $N_k>0$ such that
\begin{equation}\label{eqq41}
\|\nabla f_k (z) -\nabla f_k (w)\| < \eta_k
\end{equation}
if $\|z-w\|\leq 2^{-N_k}$.

\medskip

\noindent $\textbf{2.}$ Consider the collection $\mathcal{Q}=\{Q_j
\colon j=1,2,\dots\}$ of pairwise disjoint open-closed dyadic
cubes of generation $N_k$. So $\Rd=\bigcup Q_j$, $\ell
(Q_j)=2^{-N_k}$. Let $a_j$ be the center of $Q_j , j=1,2,\dots$.
All cubes in the construction will be in this family or in a
family obtained by translating $\{ Q_j \colon j=1,2,\dots\}$ by a
fixed vector. When $d=2$ let $e_1(a_j)$  be a unit vector
orthogonal to $\nabla f_k (a_j)$.

\medskip

\noindent $\textbf{3.}$  This paragraph only applies when the
dimension $d>2$. We will choose a set of unit vectors $\{e_\ell
(a_j)\colon \ell =1,\dots, d-1\}$ associated to each cube $Q_j$ ,
$j= 1,2,...$ which are \emph{essentially} an orthonormal basis of
the hyperplane orthogonal to $\nabla f_k (a_j)$. More precisely,
we claim that there exists a constant $C>0$ only depending on the
dimension, such that for each cube $Q_j \in \mathcal{Q}$,
$j=1,2,\dots$ one can choose unit vectors $\{e_\ell (a_j)\colon
\ell =1,\dots, d-1\}$ satisfying the following three properties:
\begin{enumerate}[(a)]
\item $| \langle \nabla f_k (a_j) , e_\ell (a_{j}) \rangle | \leq
C\, \eta_k^{1/2}$ for $\ell=1, \ldots, d-1$.
\item For any
collection $\{Q_{j(\ell)}\colon \ell \!=\!1,\dots,d -\!1\}$ of
non-necessarily distinct $d-\!1$ cubes in the family $\mathcal{Q}$
with $\bigcap_{\ell=1}^{d-1} \overline{Q_{j(\ell)}} \neq~\emptyset$,
the corresponding vectors $\{e_\ell (a_{j(\ell)}) : \ell = 1, 2,
\ldots , d-1 \}$ form a $C$-Riesz set, that is, for any set
$\{\alpha_\ell\}$ of real numbers, one has
\begin{equation*}
C^{-1} \sum_{l=1}^{d-1} |\alpha_\ell|^2 \leq \left \|
\sum_{\ell=1}^{d-1} \alpha_\ell\, e_\ell \,(a_{j(\ell)}) \right
\|^2 \leq C \sum_{\ell=1}^{d-1} |\alpha _\ell |^2.
\end{equation*}
\item The angle between any coordinate axis and any $e_\ell
(a_j)$, $\ell =1,\dots, d-1$, $j=1,2,\dots$, is between, say,
$\pi/6$ and $\pi/3$.
\end{enumerate}

The first condition tells that the unit vectors $\{e_\ell
(a_j)\colon \ell =1,\dots, d-1\}$ are, up to a small error,
orthogonal to the gradient  $\nabla f_k (a_j)$. Condition (b)
tells that the unit vectors act, up to constants, as an
orthonormal basis.  Property (c) tells that any hyperplane with
normal unit vector $e_\ell (a_j)$ is far from being parallel to
any face of any dyadic cube. These facts will be used later. Let
us explain the choice of the vectors $\{e_\ell (a_j)\colon \ell
=1,\dots, d-1\}$.

Let $\A$ be the set of indexes $j$ so that $\|\nabla f_k (a_j)\|\geq
\eta_k^{1/2}$. If $\A$ is empty one can take $\{e_\ell (a_j) \colon,
\ell=1,\dots, d-1\}$ to be any fixed orthonormal system satisfying
(c). Then (a) and (b) with $C=1$ are satisfied . So, assume $\A$ is
non empty. We first construct the unit vectors corresponding to
indexes in $\A$. Observe that by \eqref{eqq41}, one has
\begin{equation*}
\left  \| \frac{\nabla f_k (a_j)}{\| \nabla f_k (a_j)\|} -
    \frac{\nabla f_k (a_{j^{'}})}{\| \nabla f_k (a_{j^{'}})\|}
    \right \| <2\eta_k^{1/2} \; \textrm{ if }\;  j, j{'} \in \A \;\textrm{ and } \;\overline{Q_j} \cap \overline{Q_{j'}} \neq \emptyset .
\end{equation*}
Hence the orthogonal hyperplanes $M_j=\{ x\in \Rd \colon \langle x ,
\nabla f_k (a_j) \rangle =0\}$ deviate smoothly for contiguous $j$'s
in $\A$ and for every $j\in \A$ one can choose an orthonormal basis
$\{e_\ell (a_j) \colon \ell=1,\dots, d-1\}$ of $M_j$ which for any
$\ell =1,\dots, d-1$ satisfies
\begin{equation}\label{eqq42}
\| e_\ell (a_j)-e_\ell (a_{j'})\| < C\,\eta_k^{1/2} \; \textrm{ if
} \; j, j' \in \A \; \textrm{ and } \;\overline{Q_j}\cap
\overline{Q_{j'}} \neq \emptyset.
\end{equation}
Moreover when $d>2$, the dimension of $M_j$ is $d-1 \geq 2$ and one
can take $\{e_\ell (a_j) \colon \ell=1,\dots,d-1\}$ verifying also
(c). Observe that since $\{e_\ell (a_j) \colon \break  \ell=1,\dots,d-1\}$
is an orthonormal basis of $M_j$, the continuity property
\eqref{eqq42} gives
 (b) for indexes $j$ in the set $\A$. We now need to construct
$\{e_\ell (a_j) \colon \ell=1,\dots,d-1\}$ when $j\notin \A$. We
argue recursively and start with the family $\A_1$ of indexes $j\notin \A$ so that
$\overline{Q_j} \cap \overline{Q_{j'}} \neq \emptyset$ for some
index $j'\in \A$. Observe that since $j\notin \A$, property (a) with
$C=1$ will be satisfied as soon as the vectors $\{e_\ell (a_j)\colon
\ell =1,\dots, d-1\}$ are chosen of unit norm. We first choose the
vector $e_1 (a_j)$. For every set of non-necessarily distinct
indexes $j(2),\dots, j(d-1)\in \A$ such that $\overline{Q_j}\cap\,
\bigcap_{\ell=2}^{d-1} \overline{Q_{j(\ell)}}\neq \emptyset$, let
$\widetilde M\!=\!\widetilde M (j, j(2), \dots, j(d-\!1))$ be the subspace
in $\Rd$ generated by the vectors \linebreak $\{e_\ell (a_{j(\ell)}) \colon
\ell=2,\dots,d-1\}$. Observe that fixed $j$,  the number of possible
choices of indexes and hence of subspaces of this form, is bounded
by a
constant only depending on the dimension. Hence one can choose
the unit vector $e_1(a_j)$ to be at a fixed positive angle, only
depending on the dimension, to any such subspace $\widetilde M$ and
also verifying (c). This guarantees (b) for the vectors $\{e_1
(a_j), e_\ell (a_{j(\ell)})\colon \break  \ell =2,\dots, d-1\}$. Once
$e_1(a_j)$ has been chosen, we choose $e_2 (a_j)$ similarly. Now the
set of indexes will be of the form $j(1), j(3), \dots, j(d-1)$ where
$j(3),\dots, j(d-1)\in \A$ and either $j(1) \in \A$ or $j(1) = j$,
so that
\begin{equation*}
\overline{Q_j} \cap \bigcap_{\ell= 1 \atop \ell \neq 2}^{d-1}
\overline{Q_{j(\ell)}}\neq \emptyset
\end{equation*}
and we choose $e_2(a_j)$ to be a unit vector at a fixed positive
angle to any subspace in $\Rd$ generated by $\{e_\ell
(a_{j(\ell)})\colon \ell=1,3,\dots, d-1\}$ and also verifying (c).
In this way, for an index $j\notin \A$ so that $\overline{Q_j} \cap
\overline{Q_{j'}}\neq \emptyset$ for some $j'\in \A$, the unit
vectors $\{e_\ell (a_j)\colon \ell=1,\dots, d-1\}$ are chosen. So,
the unit vectors $\{e_\ell (a_j)\colon \ell=1,\dots, d-1\}$ are
constructed for the family $\A_1$. 
Next, using the same procedure, one constructs the unit vectors
$\{e_\ell (a_j)\colon \ell=1,\dots, d-1\}$ for the set $\A_2$ of
indexes $j$ such that $\overline{Q_j} \cap \overline{Q_{j'}}\neq
\emptyset$ for some $j' \in \A_1$. The construction continues
inductively.

\medskip

\noindent $\textbf{4.}$ This paragraph only applies when the
dimension $d>2$. In that case we will need $d-1$ different
collections of translated dyadic cubes in order to obtain the
convenient one dimensional estimates. The idea is that the
exceptional set will be contained in an intersection of boundaries
of cubes which are in different translated collections. This fact
will be used later when computing the dimension of the exceptional
set.

Denote $Q_j^{(1)}=Q_j$, $\; j=1,2,\dots$, and for $\ell=2, \dots,
d-1$, let $\{ Q_j^{(\ell)}\colon j=1,2,\dots, \}$ be the collection
of pairwise disjoint open-closed cubes of lengthside $\ell
(Q_j^{(\ell)})=\ell (Q_j)=2^{-N_k}$ and center $a_j^{(\ell)}$ given
by
\begin{equation*}
a_j^{(\ell)}= a_j +\frac{(\ell -1)}{10 (d-1)} 2^{-N_k} \mathbf{1}
\quad , j =1, 2, \ldots \, ,
\end{equation*}
where $\mathbf{1}= d^{-1/2}(1,\dots, 1)\in \Rd$. In other words
consider $Q_j ^{(\ell)}=\lambda (\ell)+Q_j$ where $\lambda (\ell)=
(\ell-1)2^{-N_k} \mathbf{1} /10(d-1) $, $\ell=2, \ldots, d-1$.
Observe that for any $\ell =1, \dots, d-1$, the cubes
$\{Q_j^{(\ell)} \colon j=1,2,\dots\}$ are pairwise disjoint,
$\Rd=\bigcup_j Q_j^{(\ell)}$ and moreover cubes in different
families $\{Q_j^{(\ell)} \colon j=1,2,\dots\}$ corresponding to
different indexes~$\ell$, intersect nicely. See
Figure~\ref{figura2}. More precisely for any set of distinct
indexes $\F \subset \{1,\dots, d-1\}$ of cardinality $n$, each set
of the form
\begin{equation*}
\bigcap_{\ell\in \F} \partial  Q_{j(\ell)}^{(\ell)}
\end{equation*}
is contained in at most $C(d)$ distinct $(d-n)$-dimensional planes
of $\Rd$ parallel to a coordinate hyperplane. Here $\partial Q$
means the boundary of the cube $Q$. To see this, observe that if
$\ell_1\neq \ell_2$, the choice of the~ centers of
$\{Q_j^{(\ell)}\}$ guarantees that two parallel faces of
$Q_{j(\ell_1)}^{(\ell_1)}$ and $Q_{j(\ell_2)}^{(\ell_2)}$, with
$\ell_1 \neq \ell_2$ never meet. So the points in $\bigcap_{\ell \in
\F}
\partial Q_{j(\ell)}^{(\ell)}$ must lie into at most $C(d)$ distinct $ (d - \# \F)$-dimensional planes of $\Rd$.

 \begin{figure}
\begin{center}
\epsfig{figure=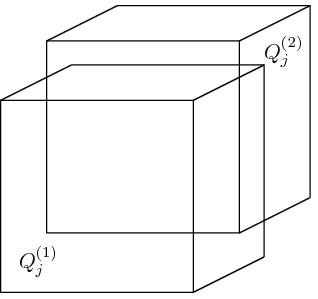} \caption{The cubes $Q_j^{(1)}$ and
$Q_j^{(2)}$ in $\R^3$.}\label{figura2}
 \end{center}
\end{figure}
\medskip

\noindent $\textbf{5.}$ This paragraph applies only when the
dimension $d>2$. Let $\pi_\ell (a_j)$ denote a hyperplane in $\Rd$
with  normal unit vector $e_\ell (a_j)$. The content of this
paragraph is the existence of a fixed constant $C(d)$ with the
following property. For any collection of $d-1$ distinct cubes
$\{Q_{j(\ell)} \colon \ell=1,\dots, d-1\}$ of the family
$\mathcal{Q}$ such that $\bigcap_{\ell =1}^{d-1}
\overline{Q_{j(\ell)}} \neq \emptyset$ and for any collection $ \F
\subset \{1, \dots,d-1\}$ of distinct indexes, the set
\begin{equation*}
\bigcap_{\ell\in \F} \partial Q_{j(\ell)}^{(\ell)} \; \cap \,
\bigcap_{\ell \in \{1,\dots, d-1\} \setminus \F} \pi_ \ell
(a_{j(\ell)})
\end{equation*}
is contained in at most $C(d)$ lines in $\Rd$.

This follows from the fact that the set under consideration is
contained in a union of at most $C(d)$ sets each of them being an
intersection of $d-1$ hyperplanes so that the angle between two of
them is bigger than a fixed constant only depending on the
dimension (and hence this set is contained in a line). To see this
observe that by paragraph~4, the points in $\bigcap_{\ell \in \F}
\partial Q_{j(\ell)}^{(\ell)}$ must lie into at most $C(d)$ distinct $(d- \#
\F)$-dimensional planes parallel to a coordinate hyperplane of
$\Rd$ while by property (c) in Paragraph 3, the angle between the
hyperplanes $\{\pi_\ell (a_{j(\ell)}) \colon \ell \in \{1,\dots,
d-1\} \setminus \F \}$ and any
 coordinate hyperplane is bigger than $\pi/6$ and by (b) of
paragraph~3 the angle between two different $\pi_\ell (a_{j(\ell)})$
is also bigger than a fixed constant.

\medskip

\noindent $\textbf{6.}$ In this  paragraph standard bump functions
adapted to the collection $\{Q_j^{(\ell)} \colon j=1,2,\dots\}$
are constructed. Pick a small positive number $\alpha_k >0$,
$\alpha_k <2^{-N_k-2}$ and a function $w_k \colon \Rd
\longrightarrow \R$, $0\leq w_k\leq 1$ with
\begin{equation*}
\begin{split}
&w_k \equiv 0 \quad \textrm{ on } \quad \Rd\setminus [-2^{-N_k
-1},2^{-N_k -1}]^d \, ,
\\*[3pt] &w_k \equiv 1 \quad \textrm{ on } \quad [-2^{-N_k
-1}+ \alpha_k , 2^{-N_k -1} -\alpha_k]^d  \, ,  \\*[3pt] &
\sup_{x\in \Rd} \alpha_k \| \nabla w_k (x)\|=C(d)<\infty.
\end{split}
\end{equation*}
Recall that the center of the cube $Q_j^{(\ell)}$ was denoted by
$a_j^{(\ell)}$ and consider the function
$
w_j^{(\ell)} (x) =w_k (x-a_j^{(\ell)}), \quad x\in \Rd.
$ 
Hence $w_j^{(\ell)} \equiv 0$ on $\Rd \setminus Q_j^{(\ell)}$,
$w_j^{(\ell)} \equiv  1$ on $ (1- \alpha_k 2^{N_k+1})
Q_j^{(\ell)}$ and $\alpha_k  \|\nabla w_j^{(\ell)} (x) \|\leq
C(d)$ for any $x\in \Rd$.

\medskip

\noindent $\textbf{7.}$  In this paragraph a one dimensional
function with \emph{small} size and \emph{large} derivative is
constructed. Pick a small positive number $\sigma_k
>0$ with  $\sigma_k <\varepsilon_k \alpha_k  \eta_k /(1 + M_k)$.
Pick an integer $n_k
>2\sigma_k^{-1}$ and a small positive number $\beta_k
<C(d)2^{-N_k}/ n_k$. Consider a smooth one dimensional periodic
function $\phi_k \colon \R\to~\R$, satisfying  $\phi_k (x+1)=\phi_k
(x)$, for any  $x\in \R$, such that
\begin{equation*}
\begin{split}
& \sup_{x\in \R} |\phi_k (x)|\leq \sigma_k\\*[3pt] & \sup_{x\in
\R} |\phi'_k (x)|\leq \varepsilon_k\
\end{split}
\end{equation*}
and such that $|\phi^{'}_k (x)|=\varepsilon_k$ for \emph{most}
points $x\in \R$. More concretely we require that the set $ \{x \in
[-1,1] \colon |\phi^{'}_k (x) |\neq \varepsilon_k\} $ can be covered
by $4n_k$ intervals $\{J_i\}$ of length $\beta_k$. This can be done
by smoothing the function $ \psi_k (t)=\varepsilon_k \, \inf
\{|t-i/n_k|\colon i\in \Z\}$. Observe that $\|\psi_k\|_\infty \leq
\varepsilon_k/n_k<\varepsilon_k \sigma_k<\sigma_k$ and $|\psi^{'}_k
(x)|=\varepsilon_k$ if $x\neq i/n_k$ for any $i\in \Z$. See Figure~\ref{figura3}.
Observe that the lengths of the intervals $\{J_i\}$ where the
regularization is performed can be taken as small as desired.
\smallskip

 \begin{figure}
\begin{center}
\epsfig{figure=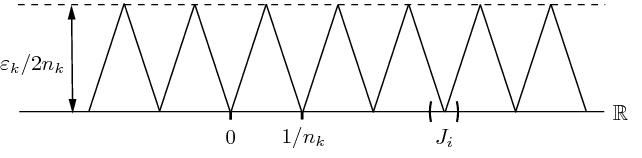} \caption{The graph of the function
$\psi_k$.}\label{figura3}
 \end{center}
\end{figure}

\noindent $\textbf{8.}$ As explained before, the main idea is to
construct a function $g_{k+1}$ whose \linebreak  gradient is
\emph{essentially} orthogonal to $\nabla f_k$, where
$f_k=\sum_{j=0}^k g_j$. To achieve this, roughly speaking, in each
cube $Q_j=Q_j^{(1)}$ the function $g_{k+1}$ will look like \linebreak
$\sum_{\ell=1}^{d-1} \phi_k (\langle x-a_j ,  e_\ell (a_j) \rangle
)$, because the gradient of this function is a linear combination
of the vectors $\{e_\ell (a_j) \colon \ell=1,\dots, d-1\}$ which
by (a) of paragraph 3 are almost orthogonal to $\nabla f_k (a_j)$.
The functions $\{w_j^{(\ell)} \}$ of paragraph~6 will be used to
paste together the pieces corresponding to different cubes. More
concretely, consider the function
\begin{equation*}
g_{k+1} (x)=\sum_{\ell=1}^{d-1} \sum_j w_j^{(\ell)} (x) \phi_k
(\langle x-a_j , e_\ell (a_j) \rangle) \, , x \in \R^d \, ,
\end{equation*}
and $f_{k+1}=f_k+g_{k+1}$. Observe that for every  $\ell=1,\dots,
d-1$ there is at most one single non-vanishing term in the inner
sum. Also observe that for every  $\ell=1,\dots, d-1$ we use the
center $a_j$ of $Q_j=Q_j^{(1)}$ and the vectors $\{e_\ell (a_j) :
\ell= 1, \ldots , d \}$ corresponding to $\nabla f_k (a_j)$. Since
$\|w_j^{(\ell)}\|_\infty \leq 1$, $\|\nabla w_j^{(\ell)}\|_\infty
\leq C(d)/\alpha_k$, $\|\phi_k\|_\infty\leq \sigma_k$ and
$\|\phi^{'}_k \|_\infty \leq \varepsilon_k$, we deduce that
\begin{equation*}
\begin{split}
& \sup_{x\in \Rd} |g_{k+1}(x)| \leq (d-1)\sigma_k \, , \\*[3pt] &
\sup_{x\in \Rd} |\nabla g_{k+1}(x)| \leq C\left (
\frac{\sigma_k}{\alpha_k}+\varepsilon_k\right ) \leq 2
C\varepsilon_k \, ,
\end{split}
\end{equation*}
because $\sigma_k$ was chosen so that
$\sigma_k<\varepsilon_k\alpha_k$. Here $C$ is a constant only
depending on the dimension.

All quantifiers needed to define $g_{k+1}$ have already appeared.
They are small numbers which have to be chosen in the right order,
so that at each step the new one is smaller than a certain function
of the previous ones. Let us summarize the order. Given
$\varepsilon_k
>0$ with $\sum \varepsilon_k^2 = \infty$, the quantifiers $\eta_k$,
$2^{-N_k}$ are chosen to satisfy \eqref{eqq41}. Given these three
quantities, the small number $\alpha_k$ is chosen in paragraph 6
when the bump functions $w_j^{(\ell)}$ are defined. Finally the last
three ones $\sigma_k$, $1/n_k$ and $\beta_k$ are chosen in paragraph
7 when the functions $\phi_k$ are introduced. We continue
recursively and we can assume that
\begin{equation*}
\sum_{j \ge k} \sigma_j \leq  2 \sigma_k
\end{equation*}

\medskip

\noindent $\textbf{9.}$ In this paragraph we show that $\nabla
g_{k+1} (x)$ is almost orthogonal to $\nabla f_k (x)$. This fact
will be crutial in the construction. Fix $x\in \Rd$. For $\ell
=1,\dots, d-1$, let $j(\ell)$ be the index so that $x\in
Q_{j(\ell)}^{(\ell)}$. Since $\|\nabla w_j^{(\ell)}\|_\infty \leq
C/\alpha_k$ and $\|\phi_k\|_\infty \leq\sigma_k$, we have
\begin{equation}\label{eqq95}
\nabla g_{k+1}(x) =\sum_{\ell =1}^{d-1} w_{j(\ell)}^{(\ell)} (x)
\phi^{'}_k (\langle x-a_{j(\ell)} , e_\ell (a_{j(\ell)}) \rangle )
\, e_\ell (a_{j(\ell)})+O(\sigma_k/\alpha_k).
\end{equation}
Here the notation $O(A_k)$ means a vector (or a quantity) whose
 norm (or modulus) is bounded by a fixed proportion, independent
 of $k$ and $x$, of $A_k$. Since $\bigcap_{\ell =1}^{d-1} Q_{j(\ell)}^{(\ell)} \neq~
\emptyset$, we have $\bigcap_{\ell=1}^{d-1} \overline{Q_{j(\ell)}}
\neq \emptyset$ and condition (a) of paragraph~3, tells that
\begin{equation*}
|\langle \nabla f_k (a_{j(\ell)}) , e_\ell (a_{j(\ell)}) \rangle
|\leq C\eta_k^{1/2}.
\end{equation*}
Since by \eqref{eqq41}, one has $\|\nabla f_k(x)-\nabla f_k
(a_{j(\ell)})\|\leq C\eta_k$, we deduce that \linebreak $|\langle
\nabla f_k(x) ,  e_\ell (a_{j(\ell)}) \rangle |\leq 2C \eta_k^{1/2}.
$ Since $\|\phi^{'}_k \|_\infty \leq \varepsilon_k$ from
\eqref{eqq95} we deduce that
\begin{equation*}
\langle \nabla f_k(x) , \nabla g_{k+1} (x) \rangle =O(\eta_k^{1/2}
\varepsilon_k)+O\big (\sup_{\Rd} |\nabla f_k|\sigma_k /\alpha_k
\big ).
\end{equation*}
Since $\sigma_k<\varepsilon_k\eta_k\alpha_k/\sup_{\Rd} |\nabla
f_k|$, we deduce that there exists a constant $C$ only depending on
the dimension such that for any $x\in \Rd$ one has
\begin{equation}\label{eqq96}
| \langle \nabla f_k(x) , \nabla g_{k+1} (x) \rangle | \le C
\varepsilon_k \eta_k^{1/2} .
\end{equation}

\medskip

\noindent $\textbf{10.}$  We have observed that $\|\nabla
g_{k+1}\|_\infty\leq C\varepsilon_k$. Next we will show that $||
\nabla g_{k+1} (x)||$ is comparable to $\varepsilon_k$ for all
points $x\in \Rd$ except possibly for a set of points $A_k$ for
which one has a certain one dimensional estimate. Here the one
dimensional information given by the choice of the vectors $\{e_\ell
(a_j)\}$ and the cubes $\{Q_j^{(\ell)}\}$ explained in paragraph~5
will be used.

Fix $x\in \Rd$. As before, for $\ell=1,\dots,d-1$, let $j(\ell)$
be the index so that $x\in Q_{j(\ell)}^{(\ell)}$. The formula
\eqref{eqq95} tells that
\begin{equation}\label{eqq440}
\begin{split}
\| \nabla g_{k+1} (x )\| &\geq \left \| \sum_{\ell=1}^{d-1}
w_{j(\ell)}^{(\ell)} (x) \phi^{'}_k \big ( \langle x-a_{j(\ell)} ,
e\, a_{j(\ell)}) \rangle \big) e_\ell (a_{j(\ell)})\right \| -
O(\sigma_k/\alpha_k)
\\*[5pt]
&\geq C\sum_{\ell=1}^{d-1} w_{j(\ell)}^{(\ell)} (x) \left
|\phi^{'}_{k} \big ( \langle x-a_{j(\ell)} ,  e_\ell (a_{j(\ell)})
\rangle \big) \right |-O(\sigma_k/\alpha_k)
\end{split}
\end{equation}
because, since $x\in\bigcap_{\ell=1}^{d-1}Q_{j (\ell)}^{(\ell)}$,
we have $\bigcap_{\ell=1}^{d-1}\overline{Q_{j(\ell)}^{(\ell)}}
\neq \emptyset$ and consequently,  by condition (b) in
paragraph~3, the vectors $\{e_\ell (a_{j(\ell)}) \colon \ell
=1,\dots, d-1\}$ form a Riesz set. Now recall that
$w_{j(\ell)}^{(\ell)}\equiv 1$ on $(1-2^{N_k+1} \alpha_k)
Q_{j(\ell)}^{(\ell)}$ and $\left |\phi^{'}_k  \big ( \langle
x-a_{j(\ell)} ,  e_\ell (a_{j(\ell)}) \rangle \big)  \right |
=\varepsilon_k$ \linebreak if $\langle x-a_{j(\ell)}, e_\ell
(a_{j(\ell)}) \rangle$ is not contained in any of the $4n_k$
intervals in $[-1,1]$ of length $\beta_k$ which appeared in the
construction of the function $\phi_k$ in paragraph~7. Let
\linebreak $J\subset [-1,1]$ be one of such intervals and let
$c(J)$ be its center. Let $\pi_\ell (J)=\pi_\ell (a_{j(\ell)})
(J)$ be the hyperplane of $\Rd$ given by
\begin{equation*}
\pi_\ell (J)=\{x\in \Rd \colon \langle x \, , e_\ell (a_{j(\ell)})
\rangle = \langle a_{j(\ell)} ,  e_\ell (a_{j(\ell)}) \rangle
+c(J)\}
\end{equation*}
and let $\widetilde{\pi_\ell} (J)=\widetilde{\pi_\ell}
(a_{j(\ell)}) (J)$ be the neighbourhood of $\pi_\ell (J)$ given by
\begin{equation*}
\widetilde{\pi_\ell} (J)=\left \{x\in \Rd \colon  \left| \langle x
\, , e_\ell (a_{j(\ell)}) \rangle - \langle a_{j(\ell)} ,  e_\ell
(a_{j(\ell)}) \rangle
 -c(J) \right| <\beta_k\right \}.
\end{equation*}
Observe that if $x\notin \bigcup_J \widetilde{\pi_\ell}(J)$ then $
\left| \phi^{'}_k ( \langle x-a_{j(\ell)} \, ,  e_\ell
(a_{j(\ell)}) \rangle ) \right|=\varepsilon_k. $ Now given a
collection of cubes $\{Q_{j(\ell)}^{(\ell)} \colon \ell=1,2,\dots,
d-1\}$ such that $\bigcap_{\ell=1}^{d-1}
Q_{j(\ell)}^{(\ell)}\neq\emptyset$, let $A(\{Q_{j(\ell)}^{(\ell)}
: \ell = 1, \ldots  , d-1\})$ be the union over all possible
collections of distinct indexes $\F\!\subset\! \{1,\dots, d-1\}$,
of sets of the form
\begin{equation*}
A(\F)=\bigcap_{\ell\in \F} \left ( Q_{j(\ell)}^{(\ell)} \setminus
\big (1-2^{N_k+1}\alpha_k\big ) Q_{j(\ell)}^{(\ell)}
 \right ) \cap \, \bigcap_{\ell \in \{1,\dots,d-1\}\setminus \F} \,\cup_J\widetilde{\pi_\ell} (a_{j(\ell)})(J)\,.
\end{equation*}
Here the union $\cup_J$ is taken over the $4n_k$ intervals $J\subset
[-1,1]$ appearing in the construction of the function $\phi_k$.
Since $\bigcap_{\ell=1}^{d-1} Q_{j(\ell)}^{(\ell)} \neq \emptyset$
one has $\bigcap_{\ell=1}^{d-1} \overline{Q_{j(\ell)}} \neq
\emptyset$ and by paragraph~5 each set
\begin{equation*}
\bigcap_{\ell \in \F} \partial Q_{j(\ell)}^{(\ell)} \cap \,
\bigcap_{\ell\in \{1,\dots, d-1\}\setminus \F} \pi_\ell
(a_{j(\ell)})
\end{equation*}
is contained in at most $C(d)$ lines in $\Rd$. Since given
$\varepsilon_k, \eta_k, N_k>0$, the quantifiers $\alpha_k$ and
$\beta_k$ can be taken arbitrarely small, each set $A (\F)$ is
contained in a \emph{small} neighbourhood of  $C(d)$ lines. Let
$A_k=\bigcup A(\{ Q_{j(\ell)}^{(\ell)}: \ell = 1, \ldots , d-1
\})$ where the union is taken over all possible collections
$\{Q_{j(\ell)}^{(\ell)}: \ell=1, \ldots, d-1\}$ of cubes for which
$ \bigcap_{\ell=1}^{d-1} Q_{j(\ell)}^{(\ell)} \neq \emptyset.$
Since the set $A (\{Q_{j(\ell)}^{(\ell)} : \ell=1, \ldots,d-1\})$
is a finite union of $A(\F)$ and in each bounded set there are at
most $C(d)$ collections of cubes $\{Q_{j(\ell)}^{(\ell)}\}$, there
exists $\delta_k >0$, depending on the previous quantifiers
$\varepsilon_k$, $\eta_k$, $N_k$, $\alpha_k$, $\beta_k$ with
$\delta_ k \to 0$, as $k \to \infty$ and a collection of dyadic
cubes $\{R_j^{(k)}\}_j$ with sidelength smaller than $\delta_k$
such that $A_k \subset \cup_j R_j^{(k)}$ and
\begin{equation}\label{eqq107}
 \sum_{j\colon R_j^{(k)} \cap \,\{ \|x\| <N\} \neq \emptyset} \ell (R_j^{(k)})\leq
C(N, d)
\end{equation}
for any $N>0$. Assume now that $x\notin A_k$. As before, for any
$\ell\!=\!1,\dots,d-1$, let $j(\ell)$ be the index so that $x\in
Q_{j(\ell)}^{(\ell)}$. Let $\F$ be~the set of indexes $\ell$ in
$\{1,\dots,d\!-1\}$ so that $x\in (1\!-\!2^{N_k+1}\alpha_k)
Q_{j(\ell)}^{(\ell)}$ and hence $w_{j(\ell)}^{(\ell)}(x)\!=\!1$.
Hence $x\!\in\! \bigcap_{\ell \notin\F}
Q_{j(\ell)}^{(\ell)}\setminus (1-2^{N_k+1}
\alpha_k)Q_{j(\ell)}^{(\ell)}$ and since $x\notin A_k$, then
$x\notin \bigcap_{\ell\in \F}\cup_J \widetilde{\pi_\ell}
(a_{j(\ell)}) (J)$. Hence \linebreak there exists  $\ell \!\in\! \F$
with $x\notin \cup_J \widetilde{\pi_\ell} (a_{j(\ell)}) (J)$ and we
deduce that \linebreak $\left |\phi^{'}_k \big ( \langle
x-a_{j(\ell)} \, , e_\ell(a_{j(\ell)}) \rangle
\big)\right|=\varepsilon_k$. Therefore using \eqref{eqq440}  we get
\begin{equation*}
\begin{split}
 \|\nabla g_{k+1} (x)\| &\geq C \,\sum_{\ell=1}^{d-1} w_{j(\ell)}^{(\ell)} (x)\left  | \phi^{'}_k \big( \langle x-a_{j(\ell)}\, , e_\ell (a_{j(\ell)}) \rangle \big) \right |-O(\sigma_k/\alpha_k) \\*[5pt]
&\geq C\varepsilon_k -O(\sigma_k/\alpha_k).
\end{split}
\end{equation*}
Since $\sigma_k / \varepsilon_k \alpha_k$ was taken small we
deduce that
\begin{equation}\label{eqq108}
\|\nabla g_{k+1}(x) \|\geq C\varepsilon_k, \quad \textrm{ for } \;
x\in \Rd \setminus A_k.
\end{equation}

\medskip

\noindent $\textbf{11.}$ We now prove that there exists a fix
constant $C=C(d)>0$ such that if $k$ is sufficiently large one has
\begin{equation}\label{eqq119}
\|\nabla (f_k+g_{k+1})(x)\|^2 \geq \|\nabla f_k(x)\|^2
+C\varepsilon_k^2
\end{equation}
for any $x\in \Rd\setminus A_k$. To prove \eqref{eqq119}, write
\begin{equation*}
\|\nabla (f_k+g_{k+1})(x)\|^2 = \|\nabla f_k(x)\|^2 +  \|\nabla
g_{k+1}(x)\|^2 +2   \langle \nabla f_k(x) \, ,  \nabla g_{k+1}(x)
\rangle.
\end{equation*}
By \eqref{eqq96}, we have $\langle \nabla f_k(x) \, , \nabla
g_{k+1}(x) \rangle \!=\! O(\varepsilon_k \eta_k^{1/2})$ while
\eqref{eqq108} tells that $\|\nabla g_{k+1}(x)\| \break  \geq
C\varepsilon_k$ if $x\in \Rd\setminus A_k$. Therefore for any
$x\in \Rd\setminus A_k$ one has
\begin{equation*}
\|\nabla (f_k+g_{k+1})(x)\|^2 \geq \|\nabla f_k(x)\|^2 +
C\varepsilon_k^2 -O(\varepsilon_k\eta_k^{1/2}).
\end{equation*}
Since $\eta_k$ was chosen in paragraph 1 such that
$\eta_k<\varepsilon_k^3$ we deduce \eqref{eqq119}

\medskip

\noindent $\textbf{12.}$ In this paragraph we define the function
$f$ in $\lambda_* (\Rd)$ whose divided differences are unbounded at
any point except for a set of Hausdorff dimension $1$. Since
$\|g_{k+1}\|_\infty \leq (d-1)\sigma_k$ and $\sum \sigma_k <\infty$
we can consider
\begin{equation*}
f=\sum_{k=0}^\infty g_k.
\end{equation*}
The function $f$ is continuous in $\Rd$. In the next paragraphs, it
will be shown that $f\in \lambda_* (\Rd)$, $f$ has unbounded divided
differences at any point of the set $\Rd \setminus A$, where
\begin{equation*}
A=\bigcap_{j=1}^\infty  \bigcup_{k\geq j} A_k,
\end{equation*}
and that $A$ has $\sigma$-finite length.

\medskip

\noindent $\textbf{13.}$ We first show that $f\in \lambda_*(\Rd)$.
We will show that the functions $f_k=\sum_{j=0}^k g_j$ are uniformly
in the small Zygmund class. Fix an integer $k>0$. Let $h\in \Rd$ and
let $\{N_j \}$ be the quantifiers appearing in paragraph~1. We use
the notation $\Delta_2 f(x,h)= f(x+h) + f(x-h) - 2f(x)$,  for $x, h
\in \R^d$.  We distinguish two possible situations
\begin{enumerate}[(a)]
\item Assume $\|h\|\!<\!2^{-N_k}$. Since \eqref{eqq41} tells that
$\|\nabla f_k (z) -\nabla f_k(w)\|<\eta_k$ if
$\|z-w\| \break <2^{-N_k}$,
we have $|\Delta_2 f_k (x,h)|\leq \eta_k \|h\|$. On the other hand
since $\|\nabla g_{k+1}\|_\infty\leq C\varepsilon_k$, we deduce that
\begin{align*}
|\Delta_2 g_{k+1} (x,h)| & \leq  |g_{k+1} (x+h) -g_{k+1}(x)|+|g_{k+1}
(x)  -g_{k+1}(x-h)|\\*[4pt]
&\leq 2C\varepsilon_k \|h\|.
\end{align*}
 So, we
obtain  that
\begin{equation}\label{eqq1311}
|\Delta_2 f_{k+1} (x,h)|\leq (\eta_k +2C\varepsilon_k)\|h\| \, .
\end{equation}
\medskip

\item Assume $2^{-N_{j+1}} <\|h\|\leq 2^{-N_j}$ for some $j<k$. We
have
\begin{equation*}
|\Delta_2 g_{k+1} (x,h) | \leq 4 \| g_{k+1}\|_\infty \leq 4(d-1)
\sigma_k \leq \eta_k 2^{-N_k} <\eta_k \|h\|.
\end{equation*}
Therefore we have
\begin{equation*}
|\Delta_2 f_{k+1} (x,h) | \leq | \Delta_2 f_{k} (x,h)|+ \eta_k
\|h\|.
\end{equation*}
Iterating this estimate we obtain that if $2^{-N_{j+1}}<\|h\|\leq
2^{-N_j}$, one has
\begin{equation*}
|\Delta_2 f_{k+1} (x,h) | \leq| \Delta_2 f_{j+1} (x,h)| + \left
(\sum_{i=j+1}^k \eta_i \right ) \|h\|.
\end{equation*}
Now,  case (a) applies to $\Delta_2 f_{j+1} (x,h)$ and by
\eqref{eqq1311} we obtain $ |\Delta_2 f_{j+1} (x,h)| \leq  (
\eta_j +2C\varepsilon_j) \|h\| $ . Hence, if $2^{-N_{j+1}}\leq
\|h\|\leq 2^{-N_j}$, we get
\begin{equation}\label{eqq13122}
|\Delta_2 f_{k+1} (x,h)| \leq  \left (\sum_{i=j}^k \eta_i
+2C\varepsilon_j\right ) \|h\|.
\end{equation}
\end{enumerate}
Now, since $\sum \eta_i<\infty$ and $\varepsilon_j \to 0$, we
deduce that $f$ is in $\lambda_*(\Rd)$

\medskip

\noindent $\textbf{14.}$ Let $A=\bigcap_j\bigcup_{k\geq j} A_k$. In
this paragraph we show that $f$ has unbounded divided differences at
the points of $\Rd\setminus A$. Let $z,x\in \Rd$. One has
\begin{equation*}
f(z)-f(x) =f_k(z)-f_k(x)+\sum_{j>k} ( g_j(z)-g_j(x) )
\end{equation*}
Since $\|g_j\|_\infty \leq (d-1) \sigma_{j-1}$, we deduce that
\begin{equation*}
| f(z)- f(x) - (f_k(z)-f_k(x) ) |  \leq 2 (d-1)\Big (\sum_{j>k}
\sigma_j \Big ).
\end{equation*}
On the other hand, applying \eqref{eqq41} one gets
\begin{equation*}
| f_k(z)-f_k(x)- \langle \nabla f_k(x), z-x \rangle |\leq 2\eta_k
\|z-x\|
\end{equation*}
if $\|z-x\|\leq 2^{-N_k}$. Hence,
\begin{equation*}
| f(z)-f(x)- \langle \nabla f_k(x), z-x \rangle |\leq 2\eta_k
\|z-x\|+2(d-1)\sum_{j>k}\sigma_j \, ,
\end{equation*}
 if $\|z-x\|\leq 2^{-N_k}$. Since $\sum_{j \ge k} \sigma_j \leq  2
 \sigma_k$ we have
\begin{equation}\label{eqq1413}
\left | \frac{f(z)-f(x)}{\|z-x\|}- \langle \nabla f_k(x) \, ,
\frac{(z-x)}{\|z-x\|} \rangle \right | \leq 2\eta_k + 2(d-1)
\sigma_k /\| z- x \|
\end{equation}
if $\|z-x\| \le 2^{-N_k}$. We can now prove that
\begin{equation}\label{eqq1414}
\left \{ x\in \Rd\colon \limsup_{z\to x} \frac{|f(z)-f(x)|}{\|z-x\|}
<\infty \right \}\subseteq A.
\end{equation}
Indeed,  fix $x\notin A$, that is  $x\in \bigcap_{k\geq j} (\Rd
\setminus A_k )$ for a certain index $j$. Applying \eqref{eqq119}
in paragraph~11, for any $k\geq j$ with $j$ sufficiently large, we
have
\begin{equation*}
\| \nabla f_{k+1} (x)\|^2 \geq \|\nabla
f_k(x)\|^2+C\varepsilon_k^2
\end{equation*}
and iterating
\begin{equation*}
\| \nabla f_{k+1} (x)\|^2 \geq C \sum_{i=j}^k \varepsilon_i^2.
\end{equation*}
Since $\sum \varepsilon_k^2=\infty$, we deduce that
\begin{equation*}
\lim_{k\to\infty}\| \nabla f_{k} (x)\| =\infty.
\end{equation*}
Now choose $z$ in \eqref{eqq1413} so that $\|z-x\|=\sigma_k$ and
such that  $z-x$ is an scalar positive multiple of $\nabla f_k(x)$.
We deduce that
\begin{equation*}
\left | \frac{f(z)-f(x)}{\|z-x\|} - \|\nabla f_k(x)\|\right | \leq
2\eta_k + 2(d-1).
\end{equation*}
Consequently
\begin{equation*}
\limsup_{z\to x}  \frac{|f(z)-f(x)|}{\|z-x\|}=\infty
\end{equation*}
which proves \eqref{eqq1414}.

\medskip

\noindent $\textbf{15.}$ Finally we only have to show that $A$ has
$\sigma$-finite length. Recall that  \eqref{eqq107} tells that there
exist $\delta_k \to 0$ and a collection of cubes
$\{R_j^{(k)}\colon j=1,2,\dots\}$, $\ell (R_j^{(k)}) \le \delta_k$
such that
\begin{equation*}
\begin{split}
& A_k\subset \bigcup_j R_j^{(k)}\\
&\sum_{j\colon R_j^{(k)} \cap  \,\{\|x\|<N\}\neq \emptyset}
\ell (R_j^{(k)})\leq C(N, d)
\end{split}
\end{equation*}
for any $N$. Then $A=\bigcap_j\bigcup_{k\geq j} A_k$  has
$\sigma$-finite length. This finishes the proof of Theorem 2.

It is worth mentioning that the set $E(f)$ has $\sigma$-finite
length ( and hence Hausdorff dimension 1). It seems likely that one
could combine the construction above with one dimensional
constructions to produce a function $f \in \lambda_*(\Rd)$ such that
$E(f)$ has zero length. However there seems to be some technical
difficulties in following this plan and we have not done it.

\section{Conservative martingales}

Let $Q_0$ be the unit cube of $\R^d$. A sequence of functions $\{S_n
: n=1, 2,\ldots \}$, $S_n\colon Q_0 \to \R$, is a dyadic martingale if
for any $n=1,2,\dots$, the function $S_n$ is constant on each dyadic
cube of generation $n$ and
\begin{equation*}
\frac{1}{|Q|}\int_Q S_{n+1} dm=S_{n\,|Q}
\end{equation*}
for any dyadic cube $Q$ of generation $n$. Here $dm$ is Lebesgue
measure in $\Rd$. This corresponds to the standard notion of
martingale when the probability space is given by Lebesgue measure
in the unit cube and the filtration is the one generated by the
dyadic decomposition. A dyadic martingale $\{S_n\}$ is in the
Bloch space if there exists a constant $C=C(\{S_n \})>0$ such that
\begin{equation*}
|S_{n\,| Q} -S_{n\,| Q'} |\leq C
\end{equation*}
for any pair of dyadic cubes $Q$, $Q'$ with $\ell (Q)=\ell
(Q')=2^{-n}$ and $\overline Q\cap \overline{Q'} \neq \emptyset$. The
infimum of the constants $C>0$ satisfying the estimate above is
called the Bloch (semi)norm of $\{S_n\}$ and will be denoted by
$\|S_n\|_*$. The Little Bloch space is the subspace of those dyadic
martingales $\{S_n \}$ in the Bloch space for which
\begin{equation*}
\lim_{n\to \infty} \sup |S_{n\,| Q} -S_{n\,| Q'} | \to 0 \, ,
\end{equation*}
where the supremum is taken over all pairs of dyadic cubes $Q, Q'$
with $\ell (Q)=\ell (Q')=2^{-n}$ and $\overline Q\cap \overline{Q'}
\neq \emptyset$. It is well known that a Bloch dyadic martingale
$\{S_n\}$ may converge at no point, that is, it may happen  that
\begin{equation*}
\lim_{n\to\infty} S_n (x)
\end{equation*}
does not exist for any $x\in Q_0$. However a Bloch dyadic
martingale is bounded at a~set of maximal Hausdorff dimension,
that is, $\dim \{ x\!\in\! Q_0 \colon \limsup_{n\to\infty} |S_n
(x)|\!<\!\infty\}\!=\!d$. It is worth mentioning that the set
above may have volume zero. Similarly the set of points where a
dyadic martingale in the Little Bloch space converges may have
volume zero but it has always maximal Hausdorff dimension.
See~\cite{Ma1}. So the situation is analogous to the one described
in the introduction for the divided differences of a function in
the Zygmund class.

Let $\{e_i \colon i=1,\dots,d\}$ be the canonical basis of $\Rd$.
A sequence of mappings $\{\mathbf{S_n}\}$, $\mathbf{S_n}\colon Q_0
\to \Rd$, $n=1,2,\dots$, is called a dyadic vector-valued
martingale if for any $i=1,2,\ldots , d$, the corresponding
component $\{\langle \mathbf{S_n} \, , e_i \rangle \}_n$ is a
dyadic martingale. A dyadic vector-valued martingale
$\{\mathbf{S_n}\}$ satisfies the Bloch condition if so does each
of its components $\{ \langle \mathbf{S_n} \, , e_i \rangle \}_n$,
$i=1,\dots, d$. Similarly $\{\mathbf{S_n}\}$ is in the Little
Bloch space if for any $i =1, \ldots, d$, the scalar martingale
$\{ \langle \mathbf{S_n} ,  e_i \rangle \}$ is in the Little
Bloch~space. In contrast with the scalar case, when $d\geq 2$
there exist Bloch vectorial dyadic martingales $\{\mathbf{S_n}\}$
in $Q_0\subset \Rd$ for which
\begin{equation*}
\limsup_{n\to\infty} \| \mathbf{S_n} (x)\|=\infty,
\end{equation*}
for any $x\in Q_0$. See~\cite[p.~34]{Ma1} and \cite{U}. However
the proof of Theorem \ref{teo1} suggests that there is a natural
class of Bloch dyadic vectorial martingales which are bounded at a
set of points which has Hausdorff dimension bigger or equal to
one.

Let $d\geq 2$. A Bloch dyadic vector-valued martingale
$\{\mathbf{S_n}\}$, $\mathbf{S_n} \colon Q_0\to \Rd$, is called
conservative if there exists a constant $C=C(\{\mathbf{S_n}\})>0$
such that for any dyadic subcube $Q$ of $Q_0$ with $\ell (Q)=2^{-n}$
and any polygonal $\gamma \subset \overline{Q}$ which intersects two
different parallel faces of $Q$ one has
\begin{equation}\label{eqq51}
\left | \int_{\gamma} \mathbf{S_{n+k}} d\gamma - \int_{\gamma}
\mathbf{S_{n}} d\gamma \right |\leq C\ell (Q),
\end{equation}
for any $k=1,2,\dots$. Here $\int_{\gamma} \mathbf{S_{m}} d\gamma$
denotes the line integral of $\mathbf{S_m}$ through $\gamma$, that
is, if the curve $\gamma$ is parametrized by $\gamma \colon [0,1]\to
\Rd$, the line integral is
\begin{equation*}
 \int_{\gamma} \mathbf{S_{m}} d\gamma = \int_0^1 \langle \mathbf{S_{m}}(\gamma (t)) ,
\gamma'(t) \rangle dt.
\end{equation*}

\noindent Property \eqref{eqq51} should be understood as a one
dimensional mean value property. For instance taking $Q=Q_0$,
$n=0$ and $\gamma$ a unit segment in the direction of $e_i$,
 one deduces that for any $i=1,\dots,d$, one has
\begin{equation*}
\sup_k \left | \int_0^1 \langle \mathbf{S_k }(t e_i ) \, , e_i
\rangle dt- \langle \mathbf{S_{0 } \,| Q_0} , e_i  \rangle \right |
\leq C.
\end{equation*}
So, the mean of $ \langle \mathbf{S_k} \, , e_i \rangle$ over a unit
segment in the direction $e_i$ is, up to bounded terms, the value of
$\langle \mathbf{S_0} \, , e_i \rangle$ on the unit cube. Given a
function in the Zygmund class there is a natural conservative Bloch
dyadic vectorial martingale which governs the behavior of the
divided differences of the function. Actually let $f\colon \Rd \to
\R$ be a function in the Zygmund class. For a dyadic cube $Q\subset
Q_0\subset \Rd$ of generation $n$, the value $\mathbf{S_n}\,| Q$ of
the vectorial martingale is defined as the vector in $\Rd$ whose
components are given by
\begin{equation}\label{eqq52}
\langle \mathbf{S_n }\,|Q ,  e_{i} \rangle =\frac{1}{\ell (Q)^d}
\left ( \int_{Q^+(i)}fdA-\int_{Q^-(i)} fdA\right ), \qquad
i=1,\dots, d
\end{equation}
 where $Q^{+}(i)$ and $Q^-(i)$ are the two opposite faces of $Q$ which are orthogonal to $e_i$ so that
 the i-th coordinate of the points of $Q^{+}(i)$ is bigger than the i-th coordinate of the points  of $Q^{-}(i)$ .
 Here $dA$ is the $(d-1)$-dimensional Lebesgue measure.
 Let us now check that the martingale $\{\mathbf{S_n}\}$ is conservative. Let $Q$ be a dyadic cube
 with $\ell (Q)=2^{-n}$. Let $\gamma$ be a polygonal which intersects two different parallel faces of $Q$ at the points, say, $A$ and $B$.
  Given an integer $k>0$, choose points $A_0 = A,A_1,\dots, A_N=B$ in
 $\gamma \cap Q$ with $2^{-n-k-1} \leq \|A_{i+1}-A_i \|<2^{-n-k}$ for any $i=0,\dots, N-1$ and let $Q_i$ be the
  dyadic subcube of $Q$ which contains $A_i$ with $\ell (Q_i)=2^{-n-k}$. Lemma~\ref{lema2} tells
  that there exists an absolute constant $C>0$ such that
 \begin{equation*}
|f(B)-f(A)- \langle \mathbf{S_n}\,|Q \, , B-A \rangle | \leq
C\|f\|_*   \| B-A \|
\end{equation*}
Similarly for any $j=0,\dots, N-1$, we have
 \begin{equation*}
|f(A_{j+1})-f(A_j)- \langle \mathbf{S_{n+k}} \,|Q_j \, ,
A_{j+1}-A_j \rangle | \leq C\|f\|_*  \|A_{j+1}-A_j \|.
\end{equation*}
Then condition \eqref{eqq51} follows easily. The proof of
Theorem~\ref{teo1} applies in this context and one can obtain the
following result.

\begin{thm} \
\begin{enumerate}[(a)]

\item Let $\{\mathbf{S_n}\}$ be a conservative Bloch dyadic
vectorial martingale in $Q_0\subset \Rd$. Then the set $ \{x \in
Q_0 \colon \limsup_{n\to\infty} \|\mathbf{S_n} (x) \|<\infty \}$
has Hausdorff dimension bigger  or equal  to one.

\item Let $\{\mathbf{S_n}\}$ be a conservative dyadic vectorial
martingale in $Q_0\subset \Rd$. Assume that $\{\mathbf{S_n}\}$ is
in the Little Bloch space. Then the set
$$ \{x\in Q_0 \colon
\lim_{n\to\infty} \mathbf{S_n}(x) \textrm{ exists }  \}
$$
has Hausdorff dimension bigger or equal to $1$.
\end{enumerate}
\end{thm}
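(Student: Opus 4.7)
The strategy is to transcribe the proof of Theorem~\ref{teo1} with two translations: $V(Q)$ is replaced by $\mathbf{S_n}|_Q$ where $Q$ is a dyadic cube of generation $n$, and Lemma~\ref{lema2} is replaced by the conservative condition~\eqref{eqq51}. The analogue of Corollary~\ref{coro1} is tautological since $\mathbf{S_n}(x)=\mathbf{S_n}|_{Q_n(x)}$, and the Bloch seminorm bounds the martingale analogue of the function $w(\delta)$ appearing in Proposition~\ref{propo1}.

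The core step is to establish a martingale version of Proposition~\ref{propo1}. Given a dyadic cube $Q$ of generation $n$ with $\mathbf{S_n}|_Q=Me$, one selects, exactly as in paragraphs 2--3 of the proof of Proposition~\ref{propo1}, the maximal subcubes $\{R_j\}$ of $Q$, of generations $n+k_j$, on which $\|\mathbf{S_{n+k_j}}|_{R_j}-Me\|\ge \varepsilon M$, and builds the polygonal $\Gamma$ with vertices $a_1,\ldots,a_N$. The crucial one-dimensional mean value identity replacing~\eqref{bofa} is
\begin{equation*}
\sum_{j\in J}\langle \mathbf{S_{n+k_j}}|_{R_j},\, a_{j+1}-a_j\rangle \;=\; \langle \mathbf{S_n}|_Q,\, a_N-a_1\rangle + O(\ell(Q)).
\end{equation*}
To prove it, pick $K>\max_j k_j$; apply~\eqref{eqq51} inside $Q$ at level $K$ to bound $|\int_{\Gamma}\mathbf{S_{n+K}}\,d\Gamma-\langle \mathbf{S_n}|_Q,a_N-a_1\rangle|$ by $C\ell(Q)$; and apply~\eqref{eqq51} inside each $R_j$ at level $K-k_j$ to replace $\int \mathbf{S_{n+K}}$ by $\int \mathbf{S_{n+k_j}}$ on each segment contained in $R_j$, with error at most $C\ell(R_j)$. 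Since $\mathbf{S_{n+k_j}}$ is constant on $R_j$, the latter integral equals $\langle \mathbf{S_{n+k_j}}|_{R_j}, a_{j+1}-a_j\rangle$, and the cumulative error $C\sum \ell(R_j)$ is $O(\ell(Q))$ since the polygonal fits in $Q$. The decomposition into good and bad indexes and the length estimate $\sum_{j\in G}\ell(R_j)\ge C\ell(Q)$ then follow verbatim from paragraphs 5--6 of the proof of Proposition~\ref{propo1}.

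With the martingale version of Proposition~\ref{propo1} in hand, part (a) follows from the recursive Cantor construction of the proof of Theorem~\ref{teo1}(a) combined with Lemma~\ref{lema1}: one obtains a set of Hausdorff dimension at least $1-\log C_0/N$ on which $\{\|\mathbf{S_n}\|\}$ stays bounded, and letting $N\to\infty$ yields dimension $\ge 1$. Part (b) is obtained as in Theorem~\ref{teo1}(b): the Little Bloch condition lets us choose $M_0(\delta)\to 0$ as $\delta\to 0$ in the martingale Proposition~\ref{propo1}, and the same summable recursive construction produces a Cantor set on which $\{\mathbf{S_n}(x)\}$ is Cauchy, hence convergent.

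The main technical obstacle is that~\eqref{eqq51} requires the polygonal to intersect two parallel faces of the ambient cube, whereas the polygonal $\Gamma$ built inside $Q$ need not do so a priori. This is handled by prepending and appending short straight segments in the direction $e$ at its endpoints so that the extended polygonal meets two opposite faces of $Q$, and similarly inside each $R_j$ by slightly extending each segment so that it crosses a pair of parallel faces of $R_j$; the extra contributions to the line integrals are $O(\ell(Q))$ (respectively $O(\ell(R_j))$) and are absorbed in the error term of the mean value identity.
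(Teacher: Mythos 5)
Your overall strategy is exactly the paper's: Theorem~3 is proved there by observing that the proof of Theorem~\ref{teo1} (that is, of Proposition~\ref{propo1} together with the Cantor construction and Lemma~\ref{lema1}) carries over once $V(Q)$ is replaced by $\mathbf{S_n}|_Q$ and the consequences of Lemma~\ref{lema2} are replaced by the conservative condition \eqref{eqq51}. Your martingale analogue of \eqref{bofa} --- obtained by applying \eqref{eqq51} once in $Q$ at a level $K$ below all stopping times and once in each stopped cube to replace $\mathbf{S_{n+K}}$ by the stopped value --- is the right substitute, and parts (a) and (b) then follow as you describe.

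The gap is in your last paragraph, where you dispose of the requirement that \eqref{eqq51} applies only to polygonals in $\overline{Q}$ joining two parallel faces. First, the segment $[a_j,a_{j+1}]$ is not contained in $\overline{R_j}$: it starts at a point of the doubled cube and has length equal to the diameter of the doubled cube, so ``slightly extending it so that it crosses a pair of parallel faces of $R_j$'' is not meaningful; one must instead apply \eqref{eqq51} in a larger (parent or neighbouring) dyadic cube and use the Bloch condition to compare that cube's value with $\mathbf{S_{n+k_j}}|_{R_j}$ --- routine, but not what you wrote. Second, and more seriously, the claim that the prepended and appended segments contribute $O(\ell(Q))$ is unjustified. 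Since $a_1\in\frac14 Q$, these segments have length comparable to $\ell(Q)$, and the quantity to be bounded is $\int_{\sigma}\langle \mathbf{S_{n+K}}-\mathbf{S_{n}},\,d\gamma\rangle$; the only available control of $\mathbf{S_{n+K}}$ along $\sigma$ comes from running the stopping decomposition along $\sigma$ and yields an error of order $\varepsilon M\,\ell(Q)$, not $\ell(Q)$. After the division by $M$ in step~6 this is of order $\varepsilon\,\ell(Q)$, which is \emph{not} negligible against the gain $C(\varepsilon)(\pi(a_n)-\pi(a_1))\sim \varepsilon^2\ell(Q)$, so the final inequality is not secured. The clean repairs are either to arrange the polygonal itself to join two parallel faces of $Q$ (start $a_1$ on a face and run the covering and stopping argument across all of $\overline{Q}$, projecting along the coordinate direction closest to $e$), or to note that for the martingales one actually cares about --- in particular \eqref{eqq52} --- condition \eqref{eqq51} holds for arbitrary polygonals in $\overline{Q}$, so the face-crossing restriction can be strengthened away in the definition. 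As written, the asserted $O(\ell(Q))$ bound for the extensions is a genuine gap.
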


It is worth mentioning that Theorem 3 applied to the martingale
defined in  \eqref{eqq52} implies Theorem~\ref{teo1}.



\end{document}